\numberwithin{equation}{section}
\theoremstyle{theorem}
\newtheorem{thm}{Theorem}
\newtheorem{cor}{Corollary}
\newtheorem{lem}{Lemma}
\theoremstyle{definition}
\newtheorem{definition}{Definition}
\newtheorem{example}{Example}
\theoremstyle{remark}
\newtheorem{problem}{Problem}
\begin{document}

\title[Tauberian constants associated with discrete and ergodic  maximal operators]{H\"older continuity of Tauberian constants associated with discrete and ergodic strong maximal operators}
\author{Paul Hagelstein}
\address{P. H.: Department of Mathematics, Baylor University, Waco, Texas 76798}
\email{\href{mailto:paul_hagelstein@baylor.edu}{paul\!\hspace{.018in}\_\,hagelstein@baylor.edu}}
\thanks{P. H. is partially supported by a grant from the Simons Foundation (\#208831 to Paul Hagelstein).}

\author{Ioannis Parissis}
\address{I.P.: Departamento de Matem\'aticas, Universidad del Pais Vasco, Aptdo. 644, 48080
Bilbao, Spain and Ikerbasque, Basque Foundation for Science, Bilbao, Spain}
\email{\href{mailto:ioannis.parissis@ehu.es}{ioannis.parissis@ehu.es}}
\thanks{I. P. is supported by grant  MTM2014-53850 of the Ministerio de Econom\'ia y Competitividad (Spain), grant IT-641-13 of the Basque Government, and IKERBASQUE}

\subjclass[2010]{Primary 37A25, Secondary: 42B25}
\keywords{ergodic theory, maximal operators, Kakutani-Rokhlin lemma, non-periodic transformation}

\begin{abstract}
   This paper concerns the smoothness of Tauberian constants of maximal operators in the discrete and ergodic settings.  In particular, we define the discrete strong maximal operator $\tilde{M}_S$ on $\mathbb{Z}^n$  by
\[
 \tilde{M}_S f(m) \coloneqq  \sup_{0 \in R \subset \mathbb{R}^n}\frac{1}{\#(R \cap \mathbb{Z}^n)}\sum_{ j\in R \cap \mathbb{Z}^n}  |f(m+j)|,\qquad m\in \mathbb{Z}^n,
\]
where the supremum is taken over all open rectangles in $\mathbb{R}^n$ containing the origin whose sides are parallel to the coordinate axes. We show that the associated Tauberian constant $\tilde{C}_S(\alpha)$, defined by
\[
\tilde{C}_S(\alpha) \coloneqq \sup_{\substack{E \subset \mathbb{Z}^n \\ 0 < \#E < \infty} } \frac{1}{\#E}\#\{m \in \mathbb{Z}^n:\, \tilde{M}_S\chi_E(m) > \alpha\},
\]
is H\"older continuous of order $1/n$.   Moreover, letting $U_1, \ldots, U_n$ denote a non-periodic collection of commuting invertible transformations on the non-atomic probability space $(\Omega, \Sigma, \mu)$ we define the associated maximal operator $M_S^\ast$  by
\[
M^\ast_{S}f(\omega) \coloneqq \sup_{0 \in R \subset \mathbb{R}^n}\frac{1}{\#(R \cap \mathbb{Z}^n)}\sum_{(j_1, \ldots, j_n)\in R}|f(U_1^{j_1}\cdots U_n^{j_n}\omega)|,\qquad \omega\in\Omega.
\]
Then the corresponding Tauberian constant $C^\ast_S(\alpha)$, defined by
\[
C^\ast_S(\alpha) \coloneqq \sup_{\substack{E \subset \Omega \\ \mu(E) > 0}} \frac{1}{\mu(E)}\mu(\{\omega \in \Omega :\, M^\ast_S\chi_E(\omega) > \alpha\}),
\]
also satisfies $C^\ast_S \in C^{1/n}(0,1).$ We will also see that, in the case $n=1$, that is  in the case of a single invertible, measure preserving transformation,  the smoothness of the corresponding Tauberian constant is characterized by the operator enabling arbitrarily long orbits of sets of positive measure.
\end{abstract}

\maketitle

\section{Introduction}
This paper is concerned with the issue of smoothness of Tauberian constants associated with discrete and ergodic maximal operators.  Tauberian constants appear at the infancy of the theory of geometric maximal operators.  Given a collection $\mathcal{B}$ of sets of finite measure in $\mathbb{R}^n$, we may define the associated maximal operator $M_\mathcal{B}$ by
\[
M_{\mathcal{B}}f(x) \coloneqq \sup_{x \in R \in \mathcal{B}}\frac{1}{|R|}\int_R |f|.
\]
In the following we will also consider discrete versions of these operators. In order to avoid ambiguities we will always assume our averaging sets in $\mathcal B$ to be \emph{open}. The same results however hold if we assume the sets in $\mathcal B$ to be closed and the proofs would also be the same.

Associated with the maximal operator $M_\mathcal{B}$ and number $\alpha\in(0,1)$ is the \emph{Tauberian constant} $C_{\mathcal{B}}(\alpha)$ defined by
\[
C_{\mathcal{B}}(\alpha) \coloneqq \sup_{\substack{E \subset \mathbb{R}^n \\ 0 < |E| < \infty}}\frac{1}{|E|}|\{x \in \mathbb{R}^n:\, M_{\mathcal{B}}\chi_E(x) > \alpha\}|.
\]
A classical result of Busemann and Feller \cite{busemannfeller1934} is that a homothecy invariant basis $\mathcal{B}$ is a density basis if and only if $C_{\mathcal{B}}(\alpha)<+\infty$  for every $0 < \alpha < 1$.   (Recall that $\mathcal{B}$ is a density basis if and only if for every measurable set $E$ we have that, for a.e. $x \in \mathbb{R}^n$,
\[
\lim_{j \rightarrow \infty}\frac{1}{|R_j|}\int_{R_j}\chi_E = \chi_E(x)
\]
holds for every sequence of sets $\{R_j\}_j$ in $\mathcal{B}$ containing $x$ whose diameters are tending to 0. See \cite{Gu} for more details.)  This result alone justifies the importance of Tauberian constants.  Furthermore, A. C\'ordoba and R. Fefferman have  shown in \cite{CorF} that Tauberian constants play a useful role in identifying classes of multiplier operators that are bounded on $L^p(\mathbb{R}^2)$ while important connections between Tauberian constants and the theory of weighted norm inequalities have been established in \cites{hlp,HP2,HPS}.

In spite of the importance of Tauberian constants in harmonic analysis and the theory of differentiation of integrals, relatively little is known about the properties of $C_{\mathcal{B}}(\alpha)$ as a function of $\alpha$ and how these properties depend on $\mathcal{B}$. Hagelstein and Stokolos proved in \cite{HS} that if $C_{\mathcal{B}}(\alpha)<+\infty$  for a \emph{single} value of $\alpha$ in $(0,1)$  then $C_{\mathcal{B}}(t)$  has at most polynomial growth in $\frac{1}{t}$ for $0 < t < 1$.  This result was extended to the weighted setting by Hagelstein, Luque, and Parissis in \cite{hlp}.

One would typically expect that, for the typical homothecy invariant density basis $\mathcal{B}$, we would have
\[
\lim_{\alpha \rightarrow 1^-}C_{\mathcal{B}}(\alpha) = 1.
\]
In general this is false, as was indicated by Beznosova and Hagelstein in \cite{bh}.  However, A. A. Solyanik proved in \cite{Solyanik} that if $\mathcal{B_{S}}$ corresponds to the collection of rectangular parallelepipeds in $\mathbb{R}^n$ whose sides are parallel to the axes, then $\lim_{\alpha \rightarrow 1^-} C_{\mathcal{B_S}}(\alpha) = 1$ with moreover the inequality $C_{\mathcal{B}_S}(\alpha) - 1 \lesssim_n (\frac{1}{\alpha} - 1)^{1/n}$ holding.  An estimate of the latter type, which quantifies how rapidly $C_{\mathcal{B}}(\alpha)$ tends to 1 as $\alpha$ tends to 1, is now referred to as a \emph{Solyanik estimate}. In \cite{HP}, Hagelstein and Parissis showed that Solyanik estimates hold when $\mathcal{B}$ is the collection of Euclidean balls in $\mathbb{R}^n$ and extended these results to the weighted setting in \cite{HP2}.

In \cite{HP2014Holder}, Hagelstein and Parissis used Solyanik estimates to prove that the Tauberian constants $C_\mathcal{B}(\alpha)$ associated with a homothecy invariant density basis of convex sets in $\mathbb{R}^n$ are locally H\"older continuous of order $p$, \emph{provided that} $C_{\mathcal{B}}(\alpha)$ satisfies a Solyanik estimate of the form $C_{\mathcal{B}}(\alpha) - 1 \lesssim (\frac{1}{\alpha} - 1)^p$.    We briefly indicate the nature of the proof in the special but important case that $\mathcal{B}$ is a homothecy invariant collection of rectangular parallelepipeds in $\mathbb{R}^n$.  For any collection $\mathcal{B}$ one may define the associated \emph{halo} $\mathcal{H}_{\mathcal{B}, \alpha}(E)$ of a measurable set $E$ with respect to $\alpha$ by
\[
\mathcal{H}_{\mathcal{B}, \alpha}(E) \coloneqq \{x \in \mathbb{R}^n:\, M_{\mathcal{B}}\chi_E(x) > \alpha\}.
\]
If $\mathcal{B}$ is a homothecy invariant collection of rectangular parallelepipeds in $\mathbb{R}^n$ one  has the iterated halo containment relation
\[
\mathcal{H}_{\mathcal{B}, \alpha}(E) \subset \mathcal{H}_{\mathcal{B}, \alpha(1 + \frac{\delta}{  2^n})}(\mathcal{H}_{\mathcal{B}, 1 - 2\delta}(E))
\]
for sufficiently small $\delta > 0$, immediately implying that
\[
C_\mathcal{B}(\alpha) \leq C_{\mathcal{B}}(\alpha(1 + \frac{\delta}{2^n}))C_{\mathcal{B}}(1 - 2\delta)
\]
for sufficiently small $\delta > 0$.   This inequality, combined with the estimate $C_{\mathcal{B}}(\alpha) - 1 \lesssim (\frac{1}{\alpha} - 1)^p$, suffices to show that $C_{\mathcal{B}}(\alpha)$ lies in the H\"older class $C^p(0,1)$.  We remark that the ideas of the above proof, combined with known Solyanik estimates for the uncentered Hardy-Littlewood maximal operator $M_{\textup{HL}}$ defined by
\[
M_{\textup{HL}}f(x) \coloneqq \sup_{x \in Q }\frac{1}{|Q|}\int_{Q} |f|,
\]
where the supremum is over all cubes with sides parallel to the coordinate axes containing $x$, may be used to show that the associated Tauberian constants $C_{\textup {HL}}(\alpha)$ for the uncentered Hardy-Littlewood maximal operator satisfy the smoothness estimate $C_{\textup{HL}}\in C^{1/n}(0,1)$.  The details of the rather delicate, associated argument may be found in \cite{HP2014Holder}.

In the recent paper \cite{HPSolErg}, Hagelstein and Parissis considered the issue of Solyanik estimates in the setting of ergodic theory. A result from \cite{HPSolErg} that we are particularly interested in here is the following.  Let $U_1, \ldots, U_n$ be a collection of invertible measure preserving transformations on a probability space $(\Omega, \Sigma, \mu)$ and define the associated strong ergodic maximal operator $M_S^*$ by
\[
M^\ast_{S}f(\omega) \coloneqq \sup_{0 \in R \subset \mathbb{R}^n}\frac{1}{\#(R \cap \mathbb{Z}^n)}\sum_{(j_1, \ldots, j_n)\in  R\cap \mathbb Z^n}|f(U_1^{j_1}\cdots U_n^{j_n}\omega)|,\qquad\omega\in\Omega,
\]
where the supremum is taken over all rectangular parallelepipeds $R$ in $\mathbb R^n$ with sides parallel to the coordinate axes that contain the origin. The corresponding Tauberian constant $C^\ast_S(\alpha)$  by
\[
C^\ast_S(\alpha) \coloneqq \sup_{\substack{E \subset \Omega \\ \mu(E) > 0}} \frac{1}{\mu(E)}\mu(\{\omega \in \Omega:\, M^\ast_S\chi_E(\omega) > \alpha\}),\qquad 0 < \alpha < 1.
\]
We have that $C^\ast _S (\alpha) $ satisfies the ergodic Solyanik estimate
\[
C^\ast _S (\alpha) - 1 \lesssim_n \Big(\frac{1}{\alpha} - 1\Big)^{1/n}.
\]
From this estimate and from an awareness of the H\"older smoothness estimates exhibited above, one might expect that $C^\ast_S (\alpha)$ should satisfy a H\"older smoothness estimate on $(0,1)$.  These expectations are dashed by the following example, arising even in the case $n=1$.

\begin{example}
Define $T$ on $[0,1)$ equipped with the Lebesgue measure by
\[
T(x) \coloneqq \Big(x + \frac{1}{2}\Big) \bmod 1.
\]
Setting
\[
T^*f(\omega) \coloneqq \sup_{M \leq 0 \leq N \in \mathbb{Z}} \frac{1}{N - M + 1}\sum_{j=M}^{N}|f(T^j\omega)|
\]
and the corresponding  Tauberian constant $C^*_T(\alpha)$ by
\[
C^\ast_T(\alpha) \coloneqq \sup_{\substack{E \subset \Omega \\ \mu(E) > 0}} \frac{1}{\mu(E)}\mu(\{\omega \in \Omega :\, T^*\chi_E(\omega) > \alpha\}),
\]
we have that the associated Tauberian constants $C^\ast_T (\alpha)$ satisfy the formula
\[
C_T^\ast(\alpha) =
\begin{cases}
  2 & \text{if}\ 0 < \alpha < \frac{2}{3} \\
  1          & \text{if}\ \frac{2}{3} \leq \alpha < 1.
\end{cases}
\]
To see this, note that if $E \subset [0,1)$, $T^\ast \chi_E$ only takes on the values $0$, $\frac{2}{3}$, or 1.  If $x \in E$, then of course $T^\ast \chi_E(x) = 1$.   If $x$ and $Tx$ are not in $E$, then $T^\ast \chi_E(x) = 0$.  If $x \notin E$ but $Tx \in E$, then $T^\ast \chi_E(x) = \frac{2}{3}$.  These observations together with the possibility of the set $E$ being, say, $[0, \frac{1}{3}]$ yield the above formula.
\end{example}

If $T$ is an \emph{ergodic} transformation on a non-atomic probability space or even if $T$ is just \emph{non-periodic}, then $C^\ast _T (\alpha)$ is smooth on $(0,1)$, and in fact $C^\ast _T (\alpha) = \frac{2}{\alpha} - 1$.   This result, explicitly proven later in the paper, follows relatively easily from transference principles and a sharp Tauberian estimate for the uncentered Hardy-Littlewood maximal operator on $\mathbb{R}$ due to Solyanik. We also have that smoothness estimates hold for Tauberian constants associated  with the strong ergodic maximal operator  given by a non-periodic collection of commuting invertible measure preserving transformations on a probability space. This is the primary result of this paper, formally stated as follows.

 \begin{thm}\label{t.main}
 Let $n \geq 2$ and $\{U_1, \ldots, U_n\}$ be a non-periodic collection of commuting invertible measure preserving transformations on a   probability space $(\Omega, \Sigma, \mu)$.   Then the Tauberian constants $C_S^\ast(\alpha)$ of the associated strong ergodic maximal operator $M_S^\ast$ lie in the H\"older class $C^{1/n}(0,1)$.  Moreover, corresponding to the $n=1$ case, if $C_T^\ast(\alpha)$ is the Tauberian constant with respect to $\alpha$ associated with a non-periodic invertible measure preserving transformation $T$ on $(\Omega, \Sigma, \mu)$, then ${C}_T^\ast(\alpha)$ is given by the formula
\[
{C}_T^\ast(\alpha) = \frac{2}{\alpha} -1, \qquad\alpha \in (0,1),
\]
and is thus smooth on $(0,1)$.
\end{thm}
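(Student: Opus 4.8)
The plan is to show that the ergodic Tauberian constant coincides with its discrete counterpart, $C_S^\ast(\alpha) = \tilde C_S(\alpha)$ for every $\alpha\in(0,1)$ and every $n\ge 1$, and then to deduce both assertions from properties of $\tilde C_S$. For $n\ge 2$ the H\"older continuity $C_S^\ast\in C^{1/n}(0,1)$ follows at once from the companion discrete result announced in the abstract, while for $n=1$ the identity reduces matters to the uncentered discrete Hardy--Littlewood maximal operator on $\mathbb Z$, whose sharp Tauberian constant equals Solyanik's real-line value $\frac{2}{\alpha}-1$.

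The mechanism behind $C_S^\ast=\tilde C_S$ is a transference argument driven by the multiparameter Kakutani--Rokhlin lemma, available precisely because $\{U_1,\dots,U_n\}$ is non-periodic: for every box $Q\subset\mathbb Z^n$ and every $\varepsilon>0$ there is a base $B\subset\Omega$ whose translates $U_1^{j_1}\cdots U_n^{j_n}B$, with $(j_1,\dots,j_n)\in Q$, are pairwise disjoint and exhaust all but an $\varepsilon$-fraction of $\Omega$. For the inequality $C_S^\ast\ge\tilde C_S$, I would fix a finite $F\subset\mathbb Z^n$ nearly realizing $\tilde C_S(\alpha)$, choose a box $Q$ containing $F$ together with a wide buffer, and plant a copy of $F$ along each fiber of a Rokhlin tower over $Q$. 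At tower points lying well inside $Q$ the ergodic maximal average of the planted set equals the discrete maximal average of $\chi_F$ at the matching lattice point, so the ergodic halo contains, up to the $\varepsilon$-error, a full tower over the discrete halo; letting $Q\uparrow\mathbb Z^n$ and $\varepsilon\downarrow 0$ produces the ratio $\tilde C_S(\alpha)$.

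For the reverse inequality $C_S^\ast\le\tilde C_S$, I would fix an arbitrary $E\subset\Omega$ of positive measure and, to keep the averaging sets under control, first truncate the operator: let $M_S^{\ast,K}$ use only rectangles with all side lengths at most $K$, so that $M_S^{\ast,K}\chi_E\uparrow M_S^\ast\chi_E$ and hence $\mu(\{M_S^\ast\chi_E>\alpha\})=\lim_{K\to\infty}\mu(\{M_S^{\ast,K}\chi_E>\alpha\})$. Fixing $K$, I would take a Rokhlin tower over a box $Q$ with a buffer of width $K$ and form, for $\omega$ in the base, the orbit function $\phi_\omega(j):=\chi_E(U_1^{j_1}\cdots U_n^{j_n}\omega)$ on $Q$. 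At fiber points at distance greater than $K$ from the boundary of $Q$ every admissible rectangle stays inside the tower, so $M_S^{\ast,K}\chi_E$ there equals the discrete maximal function of $\phi_\omega$ at the matching lattice point. Integrating the discrete Tauberian inequality $\#\{\tilde M_S\phi_\omega>\alpha\}\le\tilde C_S(\alpha)\,\#\{j:\phi_\omega(j)=1\}$ over the base, summing over levels, and letting first $Q\uparrow\mathbb Z^n$, then $\varepsilon\downarrow 0$, and finally $K\to\infty$ yields $\mu(\{M_S^\ast\chi_E>\alpha\})\le\tilde C_S(\alpha)\,\mu(E)$. The main obstacle is exactly this truncation bookkeeping: because the ergodic supremum ranges over arbitrarily large rectangles, one must argue that the boundary buffer---the only region where a discrete rectangle could reach outside the tower---contributes a measure fraction tending to zero, and it is here that non-periodicity, supplying arbitrarily tall towers, is indispensable.

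Finally, in the case $n=1$ the discrete strong operator is simply the uncentered Hardy--Littlewood maximal operator on $\mathbb Z$, and long discrete intervals show that its Tauberian constant attains the supremum $\frac{2}{\alpha}-1$, Solyanik's sharp value on $\mathbb R$, in the limit of increasing length---a limit realized exactly in the non-periodic ergodic setting through arbitrarily long orbit segments. Combined with the identity $C_T^\ast=\tilde C_S$ this gives $C_T^\ast(\alpha)=\frac{2}{\alpha}-1$, which is manifestly smooth on $(0,1)$.
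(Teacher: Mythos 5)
Your proposal is correct and its skeleton is the paper's: equate $C_S^\ast(\alpha)$ with the discrete constant $\tilde C_S(\alpha)$ by transference, then quote the discrete smoothness results (Corollary~\ref{c.smoothness}, which is exactly the ``companion discrete result'' you invoke). Your lower bound $C_S^\ast \ge \tilde C_S$---planting a near-extremal finite set $\tilde E$ over the base of a Katznelson--Weiss tower and checking that the ergodic halo contains a tower over the discrete halo---is precisely the paper's Lemma~\ref{l.equal}, except that the paper needs only the disjointness half of the Katznelson--Weiss lemma (no $\varepsilon$-exhaustion and no buffer: one fixes $N$ with the discrete halo inside $[-N,N]^n$ and a base $A$ whose translates over a box of that size are pairwise disjoint). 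Where you genuinely diverge is the direction $C_S^\ast \le \tilde C_S$: the paper disposes of it in one line by citing the Calder\'on transference principle (\cite{HPSolErg}*{Theorem 3.1}; Lemma~\ref{l.ergfromdisc} here), which is valid for an \emph{arbitrary} collection of commuting invertible measure preserving transformations, whereas you reprove it via truncation to scale $K$ and Rokhlin towers with a width-$K$ buffer. Your tower argument is sound under the hypotheses of the theorem and the buffer bookkeeping does close, but your closing claim that non-periodicity is ``indispensable'' for this direction is false: Calder\'on transference integrates the discrete Tauberian inequality along orbit blocks over all of $\Omega$ directly, using only measure preservation, and non-periodicity is indispensable only for the reverse inequality, as the paper's example $U_1(x)=(x+\tfrac{1}{2}) \bmod 1$ shows, where $C_S^\ast(\tfrac{2}{3})=1<\tilde C_S(\tfrac{2}{3})$. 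One further caveat for $n=1$: your ``long discrete intervals'' observation yields only $\tilde C_{\textup{HL}}(\alpha)\ge \frac{2}{\alpha}-1$; the matching upper bound is Solyanik's continuous theorem transferred from $\mathbb R$ to $\mathbb Z$, which in the paper is the easy half of the lemma of Section~\ref{s.HolderCS} identifying $\tilde C_S$ with $C_S$ (embed a finite $\tilde E\subset\mathbb Z$ as a union of unit intervals), so this step deserves an explicit word rather than being passed off as a limit of examples.
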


Before we get to the details of the proof in subsequent sections, a few words regarding this proof are in order.  One might suspect that, given the Solyanik estimates already at hand for the maximal operator $M_S^\ast$, one could prove a containment relation along the lines of
 \[
\mathcal{H}^\ast_{S, \alpha}(E) \subset \mathcal{H}^\ast_{S, \alpha(1 + \frac{\delta}{  2^n})}(\mathcal{H}^\ast_{S, 1 - 2\delta}(E)),
\]
where $\mathcal{H}^*_{S, \alpha}(E)$ is the halo set associated with the maximal operator $M_S^\ast$ acting on functions on a probability space. \emph{Such a containment relation is in general false.} For example, consider the $n=1$ case and let $\Omega = [0,1)$ be equipped with Lebesgue measure.  Let $U_1(x) = (x + \frac{1}{2}) \bmod 1$.   Setting $\alpha = 0.49$ and $\delta = 0.1$, we have $\mathcal{H}^\ast_{S, \alpha}([0,1/2)) = [0,1)$ but $ \mathcal{H}^\ast_{S, \alpha(1 + \frac{\delta}{2})}(\mathcal{H}^\ast_{S, 1 - 2\delta}([0,1/2))) = \mathcal{H}^\ast_{S, \alpha(1 + \frac{\delta}{2})}([0,1/2)) = [0, 1/2).$ Underlining this example is the realization that, given a set $E$ on a probability space, for small $\delta > 0$ the halo $\mathcal{H}^\ast_{S, 1 - \delta}(E)$ of $E$ could very well be the set $E$ itself, a scenario that does not happen in the typical geometric setting in which halos of sets are quantifiably larger than the sets themselves.

This lack of halo containment also manifests itself in the context of the \emph{discrete strong maximal operator on $\mathbb{Z}^n$}, denoted by here $\tilde{M}_S$ and defined by
\[
 \tilde{M}_S f(m) \coloneqq  \sup_{0 \in R \subset \mathbb{R}^n}\frac{1}{\#(R \cap \mathbb{Z}^n)}\sum_{ j \in R \cap \mathbb{Z}^n}  |f(m+j)|,\qquad m\in \mathbb{Z}^n,
\]
where the supremum is taken over all open rectangles in $\mathbb{R}^n$ containing the origin whose sides are parallel to the coordinate axes.
We may define the associated halo function $\tilde{\mathcal{H}}_{S,\alpha}(E)$ by
\[
\tilde{\mathcal{H}}_{S,\alpha}(E) \coloneqq \{n \in \mathbb{Z}^n : \, 	\tilde{M}_S\chi_E(n) > \alpha\}.
\]
Observe that for small $\delta > 0$ we might have $\tilde{\mathcal{H}}_{S, 1 - \delta}(E)$ is equal to $E$ itself, as for instance in the simple case that $E = \{0\}$.

Coming to the rescue, the desired halo containment \emph{is} satisfied by the  \emph{continuous} strong maximal operator $M_S$ on $\mathbb{R}^n$, defined by
\[
M_Sf(x) \coloneqq \sup_{x \in R}\frac{1}{|R|}\int_R |f|,
\]
the supremum being taken over all open rectangles in $\mathbb{R}^n$ containing $x$ whose sides are parallel to the coordinate axes.
It is in fact this halo containment, combined with  Solyanik estimates for $M_{S}$, that enables a proof of the Lipschitz continuity of the Tauberian constants $C_{S}(\alpha)$ associated  with $M_{S}$.  In this paper we will see that the Tauberian constants associated  with $M_S^\ast$ and $M_S$ are equal provided $M_S^\ast$ is associated with a non-periodic collection $U_1, \ldots, U_n$ of commuting invertible measure preserving transformations on a non-atomic probability space. These considerations  prove the desired H\"older continuity for $C^\ast_S(\alpha)$.

Ideas in the above proof may also be used to show that the Tauberian constants associated with the ``one sided'' ergodic maximal operator $T^{\ast+}$, defined by
\[
T^{*+}f(\omega) \coloneqq \sup_{N \geq 0} \frac{1}{N + 1}\sum_{j=0}^{N}|f(T^j\omega)|,
\]
as well as the ``two sided'' ergodic maximal operator $T^\ast f$, defined earlier, are  Lipschitz continuous and in fact smooth on $(0,1)$.  The proofs of these result are significantly easier than those in the multiparameter scenario, but we highlight them as they  relate to the maximal operators most prevalent in ergodic theory.
\\

A few words regarding the organization of the remainder of the paper are in order.   In Section~\ref{s.not} we provide some explanatory comments regarding notation used in the paper. In Section~\ref{s.HolderCS} we will prove that the Tauberian constants associated with $M_{S}$ and $\tilde{M}_S$ are the same, consequently ascertaining that the Tauberian constants associated with $\tilde{M_S}$ are H\"older continuous if $n \geq 2$ and smooth if $n=1$.   In Section~\ref{s.HolderCS*} we will prove that, if $U_1, \ldots, U_N$ are a  non-periodic collection of commuting invertible measure preserving transformations on a non-atomic probability space, then the Tauberian constants associated with $M_S^\ast$ and $\tilde{M_S}$ are equal.  The latter proof will use, not unexpectedly, the Calder\'on transference principal as well as a Kakutani-Rokhlin type theorem due to Katznelson and Weiss, \cite{KW}. As a corollary we will obtain the desired result that the Tauberian constants $C_{S}^\ast(\alpha)$ are H\"older continuous on $(0,1)$ and that, in the $n=1$ case, the associated $C_T^\ast(\alpha)$ is smooth. In Section~\ref{s.onesided} we will provide a proof that the function $C_{T}^{*+}(\alpha)$ associated  with the Tauberian constants of the one-sided ergodic maximal operator $T^{*+}$ associated  with a non-periodic transformation $T$ is smooth.  In the last section, \S\ref{s.problems}, we will indicate some open problems and suggested directions of further research.

\section{Notation}\label{s.not}  We write $A\lesssim_\tau B$ whenever $A\leq C_\tau B$ for some numerical constant $C_\tau>0$ depending on some parameter $\tau$. Then $A\simeq B $ whenever $A\lesssim B$ and $B\lesssim A$. Throughout the paper $(\Omega,\Sigma,\mu)$ is a probability space and $T$ will be an invertible measure preserving transformation on $(\Omega,\Sigma,\mu)$, which might or might not be ergodic. For a set $E\subset \mathbb Z$ we denote by $\#E$ the cardinality of $E$. We many times use the multi-index notation $m=(m_1,\ldots,m_n)\in\mathbb Z^n$ for points in the integer lattice $\mathbb Z^n$. Finally, $\chi_E$ denotes the indicator function of a measurable set $E$, either in $\mathbb{R}^n$ or $\Omega$, depending on context.

\section{H\"older continuity of $\tilde{C}_S(\alpha)$}\label{s.HolderCS}
In this section, we show that the Tauberian constants $\tilde{C}_S(\alpha)$ associated  with the discrete strong maximal operator $\tilde{M}_S$ are H\"older continuous on $(0,1)$ and in fact smooth when $n=1$.

\begin{lem}
For $0 < \alpha < 1$, let  $\tilde{C}_S(\alpha)$ and $C_{S}(\alpha)$  denote the Tauberian constants with respect to $\alpha$, associated with the discrete strong maximal operator $\tilde{M}_S$, and the continuous strong maximal operator  $M_{S}$, respectively.
Then
\[
\tilde{C}_S(\alpha) = C_{S}(\alpha).
\]
\end{lem}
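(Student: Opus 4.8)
The plan is to prove the two inequalities $\tilde C_S(\alpha)\le C_S(\alpha)$ and $C_S(\alpha)\le\tilde C_S(\alpha)$ separately; the second will be the real work.

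For $\tilde C_S(\alpha)\le C_S(\alpha)$ I would transfer a finite $E\subset\mathbb Z^n$ to the continuous set $\hat E:=\bigcup_{m\in E}(m+[0,1)^n)$, which has $|\hat E|=\#E$. The point is that every integer rectangle is a union of unit cubes of equal total volume. If $\tilde M_S\chi_E(m)>\alpha$, then since the discrete operator depends only on which lattice points a rectangle captures, some discrete rectangle $\prod_i\{k_i,\dots,l_i\}$ with $k_i\le 0\le l_i$ has average exceeding $\alpha$. Setting $R':=\prod_i[m_i+k_i,\,m_i+l_i+1)$ gives $|R'|=\prod_i(l_i-k_i+1)=\#(R\cap\mathbb Z^n)$, and $R'$ contains the whole cube $m+[0,1)^n$ as well as each unit cube $m+j+[0,1)^n$ with $j$ in the discrete rectangle. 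Hence for every $x\in m+[0,1)^n$,
\[
M_S\chi_{\hat E}(x)\ge \frac{1}{|R'|}\int_{R'}\chi_{\hat E}\ge \frac{\#\{j\in R\cap\mathbb Z^n:\ m+j\in E\}}{\#(R\cap\mathbb Z^n)}=\tilde M_S\chi_E(m)>\alpha,
\]
so $\tilde M_S\chi_E(m)>\alpha$ drags the entire cube $m+[0,1)^n$ into the continuous halo. Summing over $m$ gives $\#\{\tilde M_S\chi_E>\alpha\}\le|\{M_S\chi_{\hat E}>\alpha\}|$, and dividing by $\#E=|\hat E|$ and taking suprema yields the inequality.

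The reverse inequality is the main obstacle, and a naive reversal of the lifting fails: the continuous halo of a single cube is strictly larger, relative to its mass, than the discrete halo of a single point, so the two operators cannot be compared cube-by-cube at a fixed scale. The remedy is to pass to arbitrarily fine lattices, exploiting that $\tilde C_S(\alpha)$ is invariant under the dilation $\mathbb Z^n\to\frac1N\mathbb Z^n$, so that boundary discrepancies become negligible relative to the total counts. Fix a near-extremal $E$ with $|\{M_S\chi_E>\alpha\}|/|E|>C_S(\alpha)-\epsilon$. Enlarging $E$ to an open $U\supset E$ with $|U|$ close to $|E|$ only increases the halo, so I may assume $E$ open. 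With $G:=\{M_S\chi_E>\alpha\}$ open, choose a finite union of closed dyadic cubes $K\subset G$ with $|K|>|G|-\epsilon|E|$, and cover the compact $K$ by finitely many open rectangles $R_1,\dots,R_m$, each with $|R_i\cap E|/|R_i|>\alpha$; let $c>0$ be the common excess. Decomposing $E$ into dyadic cubes and keeping a large enough finite subunion $P\subset E$ makes $|R_i\cap(E\setminus P)|<c|R_i|$ for all $i$, whence $|R_i\cap P|/|R_i|>\alpha$ and $K\subset\bigcup_i R_i\subset\{M_S\chi_P>\alpha\}$. This yields a \emph{Jordan measurable} near-extremizer $P$ (a finite union of dyadic cubes, $|\partial P|=0$) with $|\{M_S\chi_P>\alpha\}|/|P|\ge C_S(\alpha)-2\epsilon$.

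Finally I discretize $P$ on $\frac1N\mathbb Z^n$ via $F:=\{k\in\mathbb Z^n:\ k/N\in P\}$. Because $P$ and each $R_i$ are Jordan measurable, the relevant counts are Riemann sums: $\#F/N^n\to|P|$, and for each $i$ the discrete density of $F$ inside the dilate $NR_i$ converges to $|R_i\cap P|/|R_i|>\alpha$. By finiteness of the cover this is uniform, so for all large $N$ every lattice point $k$ with $k/N\in K$ lies in the halo of the scale-$1/N$ discrete operator; since $\#\{k:\ k/N\in K\}/N^n\to|K|$, the discrete Tauberian ratio for $F$ is at least $|K|/|P|+o(1)\ge C_S(\alpha)-2\epsilon+o(1)$. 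Transferring this scale-$1/N$ computation back to $\mathbb Z^n$ by dilation invariance of $\tilde C_S(\alpha)$, then letting $N\to\infty$ and $\epsilon\to0$, gives $C_S(\alpha)\le\tilde C_S(\alpha)$. The crux throughout is this second inequality: the reduction to a Jordan near-extremizer, and the uniform Riemann-sum control on a finite cover of a compact sub-halo, are exactly what is needed to defeat the boundary effects that obstruct any fixed-scale comparison.
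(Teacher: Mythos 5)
Your proof is correct, and while the easy inequality matches the paper, the hard one follows a genuinely different route. For $\tilde C_S(\alpha)\le C_S(\alpha)$ you do exactly what the paper does: lift $E\subset\mathbb Z^n$ to a union of unit cubes and compare the discrete average over $R\cap\mathbb Z^n$ with the continuous average over the fattened rectangle $R'$ (the paper's $S_{m,R}$), modulo the harmless passage from half-open to open rectangles, which is safe because the halo is defined by a strict inequality. For $C_S(\alpha)\le\tilde C_S(\alpha)$, the paper also reduces to a finite family of equal-side-length dyadic cubes (via outer regularity and Fatou's lemma) and to finitely many rectangles of density $>\alpha$ (via inner regularity of the halo written as a union of rectangles), but it then perturbs the rectangles to have rational corners, rescales once by dilation invariance of $M_S$ so that every corner lands on $\mathbb Z^n$, and takes as discrete extremizer \emph{one lattice point per cube} (the lower-left corners), closing the argument with an exact counting identity $\#(R'_\tau\cap\mathbb Z^n)=|R_\tau|$ for the shifted rectangles $R'_\tau$. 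You instead produce a Jordan-measurable near-extremizer $P$ by a compactness/finite-subcover argument on a compact sub-halo $K$, together with the density-stability estimate $|R_i\cap(E\setminus P)|<c|R_i|$ (playing the role of the paper's Fatou and inner-regularity steps), and then sample $P$ on $\frac1N\mathbb Z^n$, letting Riemann-sum convergence --- uniform over the finite cover, which is the one point that genuinely needs checking --- absorb all boundary effects as $N\to\infty$. The paper's scheme buys an exact inequality at a single rescaled lattice, at the cost of the delicate rational-corner and shifted-rectangle bookkeeping; yours trades exactness for a limiting argument that is arguably more robust and avoids any alignment of rectangle corners with the cube grid. One cosmetic remark: your closing appeal to dilation invariance of $\tilde C_S$ is superfluous, since your set $F$ is already a finite subset of $\mathbb Z^n$ and the dilated rectangles $NR_i$ are admissible competitors for $\tilde M_S$ as they stand.
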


\begin{proof}
We first show that $\tilde{C}_S(\alpha) \leq C_{S}(\alpha)$.  Let $\tilde{E}$ be a finite set in $\mathbb{Z}^n$.  We associate to $\tilde{E}$ a set $E \subset \mathbb{R}^n$ defined by
\[
\chi_E(x_1, \ldots, x_n) \coloneqq \chi_{\tilde{E}}(\lfloor x_1 \rfloor , \lfloor x_2 \rfloor , \ldots, \lfloor x_n \rfloor);
\]
here and in the rest of the paper, for $x\in\mathbb R$ we denote by $\lfloor x \rfloor$ the largest integer which is less than or equal to $x$. For $j=(j_1,\ldots,j_n)\in\mathbb Z^n$ we write
\[
\Phi_j\coloneqq [j_1,j_1+1)\times \cdots\times [j_n,j_n+1).
\]
With this notation we have that $|E|=\sum_{j\in \tilde E} |\Phi_j|= \# \tilde E$.

For any axis parallel rectangular parallelepiped $R\ni 0$ in $\mathbb R^n$ and $m=(m_1,\ldots ,m_n)\in \mathbb Z^n$ we now have the identity
\[
\begin{split}
	\frac{1}{\# (R\cap \mathbb Z^n)} \sum_{j\in R\cap \mathbb Z^n} \chi_{\tilde E}(m+j)&=\frac{1}{\# (R\cap \mathbb Z^n)} \sum_{j\in R\cap \mathbb Z^n}\int_{\Phi_{m+j}} \chi_{\tilde E}(\lfloor u_1 \rfloor,\ldots,\lfloor u_n \rfloor)du
	\\
	& = \frac{1}{\# (R\cap \mathbb Z^n)} \int_{S_{m,R}} \chi_ E (u) du
\end{split}
\]
where we have defined $S_{m,R}\coloneqq \cup_{j\in R\cap \mathbb Z^n} \Phi_{m+j	}$, where we remember that $R$ is taken to be open. Observe that $S_{m,R}$ is an axis parallel rectangular parallelepiped in $\mathbb R^n$ with $|S_{m,R}|=\# (R\cap \mathbb Z^n)$ and that $S_{m,R}\supseteq \Phi_m$ since $R\ni 0$. We conclude that for any axis parallel rectangular parallelepiped $R$ in $\mathbb R^n$ and any $m\in\mathbb Z^n$ we have that $M_{S}\chi_E(x)\geq \tilde M_{S}\chi_{\tilde E}(m)$ for $x\in \Phi_m$. As $\#\tilde E=|E|$ we conclude that $\tilde{C}_S(\alpha) \leq C_{S}(\alpha)$ as we wanted.

We now show the more interesting inequality $\tilde{C}_S(\alpha) \geq C_{S}(\alpha)$. Let $\alpha\in(0,1)$ and $\epsilon>0$ be fixed throughout the proof. We remember here that $1\leq C_S(\alpha)<+\infty$, using for example the $L^p$ bounds for the strong maximal function and the strong differentiation theorem. Now we choose a measurable set $E$ with $0<|E|<+\infty$, and such that
\[
|\{x\in\mathbb R^n:\, M_S \chi_E(x)>\alpha|>(C_S(\alpha)-\epsilon)|E|
\]
By the outer regularity of the Lebesgue measure there exists an open set $U\supseteq E$ such that $|U\setminus E|<\epsilon |E|/C_s(\alpha)$ from which we get
\[
|\{x\in\mathbb R^n:\, M_S \chi_U(x)>\alpha|>(C_S(\alpha)-2\epsilon)|U|.
\]
Now $U$ is open so it can be written as a countable union of dyadic cubes $\{\tilde Q_k\}_k$ with disjoint interiors. By Fatou's lemma there exists a finite subcollection $\{Q_j\}_{j=1} ^N \subseteq \{\tilde Q_k\}_k$, such that
\[
\begin{split}
|\{x\in\mathbb R^n:\, M_S \chi_{\cup_{j=1} ^N Q_j}(x)>\alpha\}| & \geq  |\{x\in\mathbb R^n:\, M_S \chi_U(x)>\alpha\}| -\epsilon|U|
\\
&>(C_S(\alpha)-3\epsilon) |U|
\\
&\geq (C_S(\alpha)-3\epsilon)\Big|\bigcup_{j=1} ^N Q_j\Big|
\end{split}
\]
Since the collection $\{Q_j\}_{j=1} ^N$ is a finite collection of dyadic cubes we can assume that all the cubes in the collection have the same side-length, by splitting, if necessary, the larger cubes finitely many times.

We have showed that for the given $\alpha\in(0,1)$ and $\epsilon>0$ there exists a finite collection of dyadic cubes $\{Q_j\}_{j=1} ^N$, with equal side-length and disjoint interiors, such that
\[
|\{x\in\mathbb R^n:\, M_S \chi_{\cup_{j=1} ^N Q_j}(x)>\alpha\}|>(C_S(\alpha)-3\epsilon)\Big|\bigcup_{j=1} ^N Q_j\Big|
\]
Now by definition there exists a collection of rectangles $\mathcal R$ such that $|\tilde R\cap {\cup_{j=1} ^N Q_j}|>\alpha |\tilde R|$ for every $\tilde R\in\mathcal R$ and
\[
\{x\in\mathbb R^n:\, M_S \chi_{\cup_{j=1} ^N Q_j}(x)>\alpha\} = \bigcup_{R\in\mathcal R} R.
\]
By the inner regularity of the Lebesgue measure we can then find a finite subcollection $\{\tilde R_\tau\}_{\tau=1} ^M\subseteq \mathcal R$ such that
\[
\Big|\bigcup_{\tau=1} ^M \tilde R_\tau\Big| > (C_S(\alpha)-4\epsilon)\Big|\bigcup_{j=1} ^N Q_j\Big|
\]
and of course for each $\tau$ we have $|\tilde R_\tau\cap {\cup_{j=1} ^N Q_j}|>\alpha |\tilde R_\tau|$. Then for each $\tau$ there exists $\delta_\tau>0$ such that $|\tilde R_\tau\cap {\cup_{j=1} ^N Q_j}|>(\alpha+\delta_\tau) |\tilde R_\tau|$. Now for each $\tau$ we choose a rectangle $ R_\tau \supseteq \tilde R_\tau$, where $R_\tau$ has corners with rational coordinates, and
\[
| R_\tau\setminus \tilde  R_\tau|< \frac{\delta_\tau}{\alpha+\delta_\tau}|\tilde R_\tau| .
\]
Then we still have $|\cup_{\tau=1} ^M R_\tau|>(C_S(\alpha)-4\epsilon)|\cup_{j=1} ^N Q_j|$ and for each $\tau\in\{1,\ldots,M\}$
\[
\Big| R_\tau \cap \bigcup_{j=1} ^N Q_j\Big|\geq \frac{\alpha+\delta_\tau}{\alpha} \frac{|\tilde R_\tau|}{| R_\tau|}\alpha| R_\tau|>\alpha | R_\tau|.
\]
As the rectangles in the collection $\{R_\tau\}_{\tau=1} ^M$ have rational corners and they are finitely many, we can use the dilation invariance of the operator $M_S$ to rescale everything so that all the cubes in $\{Q_j\}_{j=1} ^N$ and all the rectangles in $\{R_\tau\}_{\tau=1} ^M$ have corners on the integer lattice $\mathbb Z^n$ and the cubes in $\{Q_j\}_{j=1} ^N$ still have equal side-lengths and disjoint interiors.

Now let us define the set $\tilde E$ to consist of all the \emph{lower left} corners of the cubes in $\{Q_j\}_{j=1} ^N$ and for a rectangle $R= (a_1,b_1)\times \cdots  (a_n,b_n)$ define the larger rectangle $ R^\prime \coloneqq (a_1-1,b_1)\times \cdots \times  (a_n-1,b_n)$. Then for all $\tau$ we have
\[
\alpha \#(  R_\tau ^\prime \cap \mathbb Z^n)=\alpha|R _\tau |<\Big| R _\tau \cap \bigcup_{j=1} ^N Q_j\Big| =\sum_{j:\, Q_j \subseteq R_\tau} |Q_j| \leq \# (\tilde E \cap  R_\tau ^\prime).
\]
This shows that $\tilde M_S \chi_{\tilde E } (m)>\alpha$ for all $m\in \cup_{\tau=1} ^M R^\prime _\tau\cap \mathbb Z^n$. Thus
\[
\begin{split}
\#\{m\in\mathbb Z^n:\,   \tilde M_S \chi_{\tilde E } (m)>\alpha\}& >\#\Big(\bigcup_{\tau=1} ^M R^\prime _\tau\cap \mathbb Z^n\Big)
\\
& = \Big| \bigcup_{\tau=1} ^M R_\tau \Big| >(C_S(\alpha)-4\epsilon) \Big|\bigcup_{j=1} ^N Q_j\Big|
\\
&=(C_S(\alpha)-4\epsilon) \# \tilde E
\end{split}
\]
as every point of $\tilde E$ corresponds to exactly one of the cubes $Q_j$. As the left hand side is independent of $\epsilon>0$ and $\epsilon$ was arbitrary, this completes the proof.
\end{proof}

\begin{cor}\label{c.smoothness}
For $0 < \alpha < 1$, let $\tilde{C}_S(\alpha)$ denote the associated Tauberian constant of the discrete strong maximal operator $\tilde{M}_S$ acting on functions on $\mathbb{Z}^n$.  Then
\begin{enumerate}
	\item [(i)] In dimensions $n\geq 2$ we have $\tilde{C}_S  \in C^{1/n}(0,1)$.
	\item[(ii)] In dimension $n=1$ we have $\tilde{C}_S \in C^\infty(0,1)$ satisfying the equation $\tilde{C}_S(\alpha) = \frac{2}{\alpha} - 1$ for all $\alpha\in(0,1)$.
\end{enumerate}
\end{cor}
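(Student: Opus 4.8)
The plan is to reduce both assertions entirely to the preceding Lemma, which establishes the identity $\tilde{C}_S(\alpha) = C_S(\alpha)$ for every $\alpha \in (0,1)$, and then to invoke the already-known regularity of the Tauberian constant $C_S$ of the \emph{continuous} strong maximal operator $M_S$. Since the Lemma shows that the two functions of $\alpha$ agree identically on $(0,1)$, any smoothness property of $C_S$ transfers verbatim to $\tilde{C}_S$, and no further analysis of the discrete operator $\tilde{M}_S$ is required.

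For part (i), I would recall the iterated halo containment relation for the continuous operator $M_S$ described in the introduction, which yields the submultiplicative inequality $C_S(\alpha) \leq C_S\bigl(\alpha(1 + \tfrac{\delta}{2^n})\bigr)\, C_S(1 - 2\delta)$ for all sufficiently small $\delta > 0$. Feeding into this the Solyanik estimate $C_S(\alpha) - 1 \lesssim_n (\tfrac{1}{\alpha} - 1)^{1/n}$ and running the argument of \cite{HP2014Holder} for the homothecy invariant basis of axis-parallel rectangular parallelepipeds, with Solyanik exponent $p = 1/n$, one concludes that $C_S \in C^{1/n}(0,1)$. The Lemma then gives $\tilde{C}_S = C_S \in C^{1/n}(0,1)$, which is exactly (i).

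For part (ii), the key observation is that in dimension $n = 1$ the strong maximal operator $M_S$ coincides with the uncentered Hardy--Littlewood maximal operator $M_{\textup{HL}}$ on $\mathbb{R}$, since in one variable rectangles are simply intervals. For this operator, Solyanik's sharp Tauberian computation supplies the explicit formula $C_S(\alpha) = \frac{2}{\alpha} - 1$ for all $\alpha \in (0,1)$. Applying the Lemma once more, $\tilde{C}_S(\alpha) = C_S(\alpha) = \frac{2}{\alpha} - 1$, and since $\alpha \mapsto \frac{2}{\alpha} - 1$ is real-analytic on $(0,1)$ we obtain $\tilde{C}_S \in C^\infty(0,1)$, establishing (ii).

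There is no genuinely hard step remaining: the entire substance is carried by the Lemma together with the previously established regularity of $C_S$. The only points requiring care are of a bookkeeping nature, namely verifying that the halo-containment argument of \cite{HP2014Holder} applies to $M_S$ with the precise Solyanik exponent $1/n$, so that the resulting H\"older exponent is exactly $1/n$ and not merely something smaller, and, in the one-dimensional case, confirming the identification $M_S = M_{\textup{HL}}$ so that Solyanik's exact formula is legitimately available.
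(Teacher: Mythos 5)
Your proposal is correct and takes essentially the same route as the paper: the paper likewise deduces both parts from the preceding lemma's identity $\tilde{C}_S(\alpha) = C_S(\alpha)$, citing \cite{HP2014Holder}*{Corollary 2} for $C_S \in C^{1/n}(0,1)$ (which is proved there by exactly the halo-containment plus Solyanik-estimate argument you sketch) and the main theorem of \cite{Solyanik}, together with the observation that $M_S = M_{\textup{HL}}$ when $n=1$, for the exact formula $C_S(\alpha) = \frac{2}{\alpha} - 1$. The only difference is presentational: the paper invokes the cited results directly, whereas you outline their proofs, but the mathematical content is identical.
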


\begin{proof}
From \cite{HP2014Holder}*{Corollary 2} we have that $C_{S}(\alpha) \in C^{1/n}(0,1)$. Moreover, from the main theorem of \cite{Solyanik} we have that, for $n=1$, $C_{S}(\alpha) = \frac{2}{\alpha} - 1$.   (Note that in the $n=1$ case the strong maximal operator is the same as the uncentered Hardy-Littlewood maximal operator.)   By the above lemma the desired result holds.
\end{proof}

\section{H\"older continuity of $C_S^\ast(\alpha)$}\label{s.HolderCS*}
We now show that, if $U_1, \ldots, U_n$ form a non-periodic collection of commuting  invertible transformations on the non-atomic probability space $(\Omega, \Sigma, \mu)$, for every $0 < \alpha < 1$ the associated Tauberian constants $C^\ast_S(\alpha)$ and $\tilde{C}_S(\alpha)$ are the same. The fact that $C^\ast_S(\alpha) \leq \tilde{C}_S(\alpha)$ follows readily using the Calder\'on transference principle.

\begin{lem}\label{l.ergfromdisc}
Let $U_1, \ldots, U_n$ form a collection of commuting invertible measure preserving transformations on  a  probability space $(\Omega, \Sigma, \mu)$  and for $\alpha\in(0,1)$ let $C^\ast_S(\alpha)$ and $\tilde{C}_S(\alpha)$ denote the Tauberian constants associated  with the maximal operators $M_S^\ast$ and $\tilde{M}_S$, respectively.  Then
\[
C^\ast_S(\alpha) \leq \tilde{C}_S(\alpha).
\]
\end{lem}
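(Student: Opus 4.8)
The plan is to prove the stronger pointwise bound
\[
\mu(\{\omega\in\Omega:\, M_S^\ast\chi_E(\omega)>\alpha\})\le \tilde C_S(\alpha)\,\mu(E)
\]
for every measurable $E$ with $\mu(E)>0$, via the Calder\'on transference principle, and then take the supremum over $E$. Writing $U^m\coloneqq U_1^{m_1}\cdots U_n^{m_n}$ for $m\in\mathbb Z^n$, the key observation is that commutativity of the $U_i$ gives $U^{m}U^{j}=U^{m+j}$, so that for a fixed $\omega$, introducing the orbit function $\phi_\omega(k)\coloneqq \chi_E(U^k\omega)$ on $\mathbb Z^n$, one has the transference identity
\[
M_S^\ast\chi_E(U^m\omega)=\tilde M_S\phi_\omega(m),\qquad m\in\mathbb Z^n.
\]
This identity is exactly where the commutativity hypothesis is used.

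Since every open rectangle containing the origin is bounded, I would first truncate the scale: let $M_S^{\ast,L}$ and $\tilde M_S^L$ denote the maximal operators in which the supremum is restricted to rectangles $R\subset(-L,L)^n$. These increase pointwise to $M_S^\ast$ and $\tilde M_S$ as $L\to\infty$, the transference identity persists for the truncated operators, and by monotone convergence it suffices to establish $\mu(\{M_S^{\ast,L}\chi_E>\alpha\})\le\tilde C_S(\alpha)\,\mu(E)$ for each fixed $L$. Next, fixing $L$ and a large $N>L$, I would pass to a finite box: set $Q_N\coloneqq\{m\in\mathbb Z^n:\,|m_i|\le N\}$ and, for each $\omega$, form the finite set $\tilde E_\omega^N\coloneqq\{m\in Q_N:\, U^m\omega\in E\}$. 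For $m$ in the inner box $Q_{N-L}$, every rectangle $R\subset(-L,L)^n$ satisfies $m+(R\cap\mathbb Z^n)\subset Q_N$, so $\chi_{\tilde E_\omega^N}$ agrees with $\phi_\omega$ on the relevant translates, whence
\[
M_S^{\ast,L}\chi_E(U^m\omega)>\alpha \ \Longrightarrow\ \tilde M_S\chi_{\tilde E_\omega^N}(m)>\alpha\qquad(m\in Q_{N-L}).
\]
Applying the definition of $\tilde C_S(\alpha)$ to the finite set $\tilde E_\omega^N$ then gives
\[
\#\{m\in Q_{N-L}:\, M_S^{\ast,L}\chi_E(U^m\omega)>\alpha\}\le \tilde C_S(\alpha)\,\#\tilde E_\omega^N.
\]

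To finish, I would integrate this inequality in $\omega$ and invoke that each $U^m$ is measure preserving. On the right, $\int_\Omega \#\tilde E_\omega^N\,d\mu=\sum_{m\in Q_N}\mu(E)=(\#Q_N)\mu(E)$. On the left, since $\{\omega:\,M_S^{\ast,L}\chi_E(U^m\omega)>\alpha\}=(U^m)^{-1}\{\omega:\,M_S^{\ast,L}\chi_E(\omega)>\alpha\}$ has measure independent of $m$, the integral equals $(\#Q_{N-L})\,\mu(\{M_S^{\ast,L}\chi_E>\alpha\})$. Hence
\[
\mu(\{M_S^{\ast,L}\chi_E>\alpha\})\le \tilde C_S(\alpha)\,\frac{\#Q_N}{\#Q_{N-L}}\,\mu(E).
\]
Letting $N\to\infty$ makes the ratio $\#Q_N/\#Q_{N-L}\to1$, and then letting $L\to\infty$ yields the claim; taking the supremum over $E$ gives $C_S^\ast(\alpha)\le\tilde C_S(\alpha)$.

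The main obstacle is the two-stage limiting procedure intrinsic to transference: one must truncate the admissible rectangles to a scale $L$ so that the orbit sets $\tilde E_\omega^N$ are finite and the operator is a finite maximum, and then control the boundary discrepancy between the full box $Q_N$ and the inner box $Q_{N-L}$ on which the comparison is valid, which forces the limit $N\to\infty$ before $L\to\infty$. The remaining care points are routine: checking measurability of $\omega\mapsto M_S^{\ast,L}\chi_E(U^m\omega)$ (a finite supremum of measurable functions precomposed with measure preserving maps) before integrating, and confirming the monotone increase $M_S^{\ast,L}\nearrow M_S^\ast$. I note that this direction uses neither non-periodicity nor non-atomicity of $(\Omega,\Sigma,\mu)$; those hypotheses enter only in the reverse inequality.
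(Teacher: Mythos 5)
Your proposal is correct and matches the paper's approach: the paper proves this lemma by a one-line citation to \cite{HPSolErg}*{Theorem 3.1}, which is a general Calder\'on transference principle, and your truncate--transfer--integrate argument (with the limits taken in the correct order, $N\to\infty$ before $L\to\infty$, so that the boundary ratio $\#Q_N/\#Q_{N-L}$ tends to $1$) is a correct self-contained instantiation of exactly that principle, including the right use of commutativity for the orbit identity and of measure preservation in the integration step. The only point worth making explicit is the degenerate case $\tilde E_\omega^N=\emptyset$, which is excluded from the supremum defining $\tilde C_S(\alpha)$; there your counting inequality still holds trivially, since then $M_S^{\ast,L}\chi_E(U^m\omega)=0$ for every $m\in Q_{N-L}$ and both sides vanish.
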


\begin{proof}
This result follows immediately from  \cite{HPSolErg}*{Theorem 3.1}, where $\mathcal{B}$ is taken to be the collection of all open rectangular parallelepipeds in $\mathbb{R}^n$ containing the origin and whose sides are parallel to the coordinate axes.
\end{proof}

The inequality $\tilde{C}_S(\alpha) \leq C^\ast_S(\alpha)$ does \emph{not} hold in general, as can be seen even in the $n=1$ case by setting $U_1(x) \coloneqq (x + \frac{1}{2}) \bmod 1$ on the probability space $[0,1)$ equipped with the Lebesgue measure.  For this transformation  we have $1 = C^\ast_S(\frac{2}{3}) < \tilde{C}_S(\frac{2}{3})$, the latter being at least 2.   However, we shall see that if $U_1, \ldots, U_n$ form a non-periodic collection of commuting invertible measure preserving transformations on a nonatomic probability space $(\Omega, \Sigma, \mu)$, we do have that $\tilde{C}_S(\alpha) \leq C^\ast_S(\alpha)$ and hence equality between $\tilde{C}_S(\alpha)$ and $C^\ast_S(\alpha)$ holds.

\begin{definition} Let $T$ be an invertible measure preserving transformation on a probability space $(\Omega,\Sigma,\mu)$.	A point $\omega\in \Omega$ is \emph{a periodic point} if there exists a positive integer $n$ such that $T^n \omega=\omega$. Alternatively we say that \emph{$T$ is periodic at $\omega$.} The transformation $T$ is called \emph{non-periodic} if the set of its periodic points has measure zero, that is, if it is almost nowhere periodic.   More generally, a collection of commuting invertible measure preserving transformations $U_1, \ldots, U_n$ is said to be non-periodic if for every $(l_1, \ldots, l_n) \in \mathbb{Z}^n\backslash \{0\}$ we have
\[
\mu\{x \in \Omega :\, U_{1}^{\ell_1}\cdots U_{n}^{\ell_n} x = x\} = 0.
\]
\end{definition}
With this definition in hand we can now show that the Tauberian constants of an ergodic strong maximal operator associated with a non-periodic collection of invertible measure preserving transformations coincide with those of the discrete strong maximal operator.
\begin{lem}\label{l.equal}
Let $U_1, \ldots, U_n$ form a non-periodic collection of commuting invertible measure preserving transformations on a probability space $(\Omega, \Sigma, \mu)$ and let, for $0 < \alpha < 1$, $C^\ast_S(\alpha)$ and $\tilde{C}_S(\alpha)$ be the Tauberian constants associated  with the maximal operators $M_S^\ast$ and $\tilde{M}_S$.   Then
\[
C^\ast_S(\alpha) = \tilde{C}_S(\alpha).
\]
\end{lem}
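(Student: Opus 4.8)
Since Lemma~\ref{l.ergfromdisc} already gives $C^\ast_S(\alpha)\le\tilde C_S(\alpha)$, the plan is to establish only the reverse inequality $\tilde C_S(\alpha)\le C^\ast_S(\alpha)$, and this is exactly where the non-periodicity hypothesis must enter. I would start on the discrete side: fix $\alpha\in(0,1)$ and $\epsilon>0$, and choose a finite near-extremizer $\tilde E\subset\mathbb Z^n$, with $K\coloneqq\#\tilde E$, satisfying $\#\tilde{\mathcal H}_{S,\alpha}(\tilde E)>(\tilde C_S(\alpha)-\epsilon)\#\tilde E$. A preliminary observation I would record is that the halo $H\coloneqq\tilde{\mathcal H}_{S,\alpha}(\tilde E)$ is a \emph{finite} set: any rectangle $R\ni 0$ witnessing $\tilde M_S\chi_{\tilde E}(m)>\alpha$ must have $\#(R\cap\mathbb Z^n)<K/\alpha$ and must meet $\tilde E-m$, so its lattice points span fewer than $K/\alpha$ integers in each coordinate and $m$ is confined to within $O(K/\alpha)$ of $\tilde E$. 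After a translation I may thus assume $\tilde E\subseteq H\subseteq I\coloneqq\{0,\dots,L-1\}^n$ for some integer $L$, with every point of $H$ witnessed by a rectangle of bounded extent.

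The heart of the argument is to transplant this configuration into $\Omega$ by means of a Rokhlin tower. Invoking the multidimensional Kakutani–Rokhlin theorem of Katznelson and Weiss \cite{KW}, which applies precisely because $U_1,\dots,U_n$ is non-periodic, I would obtain for arbitrarily large $\Lambda$ a measurable base $F$ with $\mu(F)>0$ such that the levels $U^jF$, with $U^j\coloneqq U_1^{j_1}\cdots U_n^{j_n}$ and $0\le j_i<\Lambda$, are pairwise disjoint. I would then tile the box $\{0,\dots,\Lambda-1\}^n$ by translates $I_t=a_t+I$, drop a copy $\tilde E_t=a_t+\tilde E$ of the extremizer into each tile, form $\mathcal E\coloneqq\bigcup_t\tilde E_t$, and define the test set
\[
E\coloneqq\bigcup_{k\in\mathcal E}U^kF,\qquad\text{so that}\qquad \mu(E)=\#\mathcal E\cdot\mu(F)>0.
\]

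The mechanism linking the two sides is the dictionary $\chi_E(U^j\omega)=\chi_{\mathcal E}(k+j)$, valid for $\omega\in U^kF$ as long as $k+j$ stays in the box. Calling a tile \emph{interior} when it sits at distance more than $K/\alpha$ from the boundary of the box, I would argue that for interior $t$ and $k\in a_t+H$ the bounded witnessing rectangle for $\tilde M_S\chi_{\tilde E}(k-a_t)>\alpha$ never leaves the box, so that (since $\tilde E_t\subseteq\mathcal E$) the dictionary forces $M^\ast_S\chi_E(\omega)>\alpha$ for all $\omega\in U^kF$. Counting with disjointness then yields $\mu(\{M^\ast_S\chi_E>\alpha\})\ge\#\mathcal T_{\mathrm{int}}\cdot\#H\cdot\mu(F)$, where $\mathcal T_{\mathrm{int}}$ is the set of interior tiles, and dividing by $\mu(E)=\#\mathcal T\cdot K\cdot\mu(F)$ gives
\[
\frac{\mu(\{M^\ast_S\chi_E>\alpha\})}{\mu(E)}\ \ge\ \frac{\#\mathcal T_{\mathrm{int}}}{\#\mathcal T}\cdot\frac{\#H}{K}\ \ge\ \frac{\#\mathcal T_{\mathrm{int}}}{\#\mathcal T}\bigl(\tilde C_S(\alpha)-\epsilon\bigr).
\]
Because $K$, $\alpha$ and $L$ are fixed while $\Lambda\to\infty$, the interior fraction $\#\mathcal T_{\mathrm{int}}/\#\mathcal T$ tends to $1$, so $C^\ast_S(\alpha)\ge\tilde C_S(\alpha)-\epsilon$, and letting $\epsilon\to0$ finishes the proof.

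I expect the main obstacle to be this Rokhlin step together with the boundary bookkeeping it forces. The Katznelson–Weiss tower is what genuinely requires non-periodicity—without it, as the periodic examples in the introduction show, the inequality fails—and the orbit-to-lattice dictionary only holds away from the tower's boundary, so the boundary tiles, where a long averaging rectangle could run off the tower and corrupt the correspondence, must be controlled and then swept into the error term by enlarging $\Lambda$. Verifying that only rectangles of bounded extent are relevant, i.e.\ the finiteness of $H$, is exactly what renders this boundary layer negligible in the limit.
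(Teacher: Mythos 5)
Your proof is correct and follows essentially the same route as the paper's: reduce to the inequality $\tilde C_S(\alpha)\le C^\ast_S(\alpha)$, observe the halo of a finite near-extremizer $\tilde E\subset\mathbb Z^n$ is finite, transplant $\tilde E$ into a Katznelson--Weiss Rokhlin tower, and count disjoint levels. The only difference is that the paper uses a single tower sized to contain the halo and gets the inequality directly with no limit, because the needed estimate is one-sided --- $k+j\in\mathcal E$ forces $U_1^{k_1+j_1}\cdots U_n^{k_n+j_n}\omega\in E$ regardless of whether $k+j$ stays in the box, so averaging rectangles that leave the tower can only increase the orbit averages --- which makes your tiling by many copies, the interior/boundary bookkeeping, and the $\Lambda\to\infty$ limit unnecessary (though harmless).
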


\begin{proof}
Let $0 < \alpha < 1$. By Lemma~\ref{l.ergfromdisc} it suffices to show that $  C^\ast_S(\alpha) \geq \tilde{C}_S(\alpha)$.  Let $\tilde{E}$ be a nonempty set in $\mathbb{Z}^n$ with finitely many points and $N \in \mathbb{Z}_+$ be such that $\{m \in \mathbb{Z}^n : \, \tilde{M}_S\chi_{\tilde{E}}(m) > \alpha\} \subseteq [-N, N]^n$. By a refinement of the Kakutani-Rokhlin lemma, due to Katznelson and Weiss \cite{KW}, there exists a set $A \subset \Omega$ of positive measure  such that $U_1^{j_1}U_2^{j_2}\cdots U_n^{j_n}A$ are pairwise disjoint where $0 \leq j_i \leq N$ for $i = 1, \ldots, n$.   Define the set $E$ in $\Omega$ by
\[
E \coloneqq \bigcup_{(j_1, \ldots, j_n) \in \tilde{E}} U_1^{j_1}U_2^{j_2}\cdots U_n^{j_n} A\eqqcolon \bigcup_{j\in \tilde E}E_j.
\]
Now we claim that
\[
\bigcup_{\substack{m\in\mathbb Z^n:\, \tilde{M}_S\chi_{\tilde{E}}(m) > \alpha}} U_1 ^{m_1}\cdots U_n ^{m_n}A \subseteq \{\omega\in \Omega:\, M_S^*\chi_{E}(\omega)>\alpha\}.
\]
Indeed, let $m\in\mathbb Z^n$ and $R$ a rectangular parallelepiped in $\mathbb R^n$ with $0\in R$ such that
\[
\frac{1}{\#(R\cap \mathbb Z^n)}\sum_{k \in R\cap \mathbb Z^n} \chi_{\tilde E}(k+m)>\alpha.
\]
Then by the disjointess of the sets $\{E_j\}_j$ we have for $\omega \in U^{m_1}\cdots U^{m_n}A$ that
\[
\begin{split}
\frac{1}{\#(R\cap \mathbb Z^n)}\sum_{j\in R\cap \mathbb Z^n} \chi_E (U^{j_1}\cdots U^{j_n}\omega)& = \frac{1}{\#(R\cap \mathbb Z^n)}\#\{j\in R\cap \mathbb Z^n:\, U^{j_1}\cdots U^{j_n} \omega \in \cup_{k\in \tilde E} E_k\}
\\
&=  \frac{1}{\#(R\cap \mathbb Z^n)} \sum_{k\in \tilde E}  \#\{j\in R\cap \mathbb Z^n:\, U^{j_1}\cdots U^{j_n} \omega \in E_k\}
\\
&\geq \frac{1}{\#(R\cap \mathbb Z^n)} \sum_{k\in \tilde E}\#\{j\in R\cap \mathbb Z^n:\, m+j=k\}
\\
& =  \frac{1}{\#(R\cap \mathbb Z^n)} \#\bigcup_{k\in \tilde E} \{j\in R\cap \mathbb Z^n:\, m+j=k\}
\\
& = \frac{1}{\#(R\cap \mathbb Z^n)} \sum_{j\in R\cap \mathbb Z^n} \chi_{\tilde E}(m+j)>\alpha.
\end{split}
\]
Accordingly,
\[
\begin{split}
\mu(\{\omega \in \Omega :\, M_S^*\chi_{E}(\omega) > \alpha\}) &\geq \sum_{m \in \mathbb{Z}^n : \, \tilde{M}_S\chi_{\tilde{E}}(m) > \alpha  } \mu(U^{m_1}\cdots U^{m_n}A).
\\
& \geq \mu(A)\#\{m \in \mathbb{Z}^n : \, \tilde{M}_S\chi_{\tilde{E}}(m) > \alpha  \}.
\end{split}
\]

Since $\mu(E) = \mu(A) \#\tilde{E}$, it follows that
\[
\frac{\mu(\{\omega \in \Omega :\, M_S^*\chi_{E}(\omega) > \alpha\})}{\mu(E)} \geq \frac{\#\{m \in \mathbb{Z}^n : \tilde{M}_S\chi_{\tilde{E}}(m) > \alpha \}}{\#\tilde{E}}.
\]
As $\tilde{E}$ was arbitrary in $\mathbb{Z}^n$, we get $C^\ast_S (\alpha) \geq \tilde{C}_S(\alpha)$ as desired.
\end{proof}

\begin{proof}[Proof of Theorem~\ref{t.main}]
The proof  follows immediately from Corollary~\ref{c.smoothness} and Lemma~\ref{l.equal}.
\end{proof}

\subsection{A characterization of smoothness for the Tauberian constant of a single measure preserving transformation}
In the case of a single invertible measure preserving transformation we can actually state and prove a characterization of smoothness of $C_S ^*$. For this we introduce the following index of an invertible, measure preserving transformation $T$ on a probability space $(\Omega,\Sigma,\mu)$.

\begin{definition} Let $T$ be an invertible measure preserving transformation acting on a probability space $(\Omega,\Sigma,\mu)$. If for every positive integer $N$ there exists a measurable set $A\subset \Omega$ with $\mu(A)>0$, such that the sets $A,TA,\ldots,T^{N}A$ are disjoint we define the index of $T$ to be $N_T\coloneqq \infty$. Otherwise the index of $T$ is to defined to be the largest positive integer $N_T$ for which there exists a measurable set $A\subset \Omega$ with $\mu(A)>0$ such that the sets $A,T A,\ldots,T^{N_T-1}A$  are pairwise disjoint.
\end{definition}
Note that if $T$ is non-periodic the Kakutani-Rokhlin lemma implies that $N_T=\infty$. However the condition $N_T=\infty$ is in general strictly weaker than the non-periodicity condition in the assumption of the Kakutani-Rokhlin lemma. Indeed, consider for example $T_1:[0,1/2)\to [0,1/2)$, equipped with the Lebesgue measure, to be (say) ergodic and $T_2:[1/2,1)\to [1/2,1]$ to be the identity. Then $T\coloneqq T_1\oplus T_2 :[0,1]\to [0,1]$ inherits the property $N_T=\infty$ from $T_1$ but it is obvious that $T$ fails the non-periodicity assumption because of $T_2$.

The case $N_T=1$ is of special importance as, in this case, we can calculate exactly $C_T ^*(\alpha)$.
\begin{lem}\label{l.index1}Let $T$ be an invertible measure preserving transformation on a probability space $(\Omega,\Sigma,\mu)$ and suppose that $T$ has index $N_T=1$. Then $C^* _T(\alpha)=1$ for all $\alpha\in[0,1)$.
\end{lem}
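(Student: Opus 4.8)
The plan is to show that the hypothesis $N_T = 1$ forces $T$ to be the identity almost everywhere, after which the conclusion is immediate. First I would unwind what $N_T = 1$ means: since a one-term sequence is trivially pairwise disjoint one always has $N_T \geq 1$, so $N_T = 1$ says precisely that there is \emph{no} measurable set $A$ with $\mu(A) > 0$ for which $A$ and $TA$ are disjoint; equivalently, $\mu(A \cap TA) > 0$ whenever $\mu(A) > 0$. The key claim is then that this self-recurrence condition implies $\mu(\{\omega : T\omega \neq \omega\}) = 0$. This rigidity is in fact necessary as well as sufficient: already the case $\alpha = 0$ of the assertion $C_T^\ast(\alpha) = 1$ amounts to $\mu(\bigcup_{j} T^j E) = \mu(E)$ for every $E$, i.e.\ to every measurable set being $T$-invariant modulo $\mu$, which via a countable separating family forces $T = \mathrm{id}$ a.e. So the genuine content of the lemma is exactly the implication $N_T = 1 \Rightarrow T = \mathrm{id}$ a.e.

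To prove the key claim I would argue by contraposition. Suppose $B \coloneqq \{\omega : T\omega \neq \omega\}$ has $\mu(B) > 0$; note $B$ is $T$-invariant since $T$ is invertible. It suffices to produce $A \subseteq B$ with $\mu(A) > 0$ and $A \cap TA = \emptyset$ (mod $\mu$), as this contradicts $N_T = 1$. I would decompose $B$, modulo a null set, into its aperiodic part $B_\infty \coloneqq \{\omega : T^j\omega \neq \omega \text{ for all } j \geq 1\}$ and the exact-period parts $B_k \coloneqq \{\omega : T^k\omega = \omega,\ T^j\omega \neq \omega \text{ for } 1 \leq j < k\}$, $k \geq 2$; each piece is measurable and $T$-invariant, and since $\mu(B) > 0$ at least one of them has positive measure. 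If $\mu(B_\infty) > 0$, then $T$ is aperiodic on the (automatically non-atomic) set $B_\infty$, and Rokhlin's lemma yields $A \subseteq B_\infty$ with $A$ and $TA$ disjoint and $\mu(A) > 0$. If instead $\mu(B_k) > 0$ for some $k \geq 2$, then $T$ acts freely on $B_k$ with $T^k = \mathrm{id}$, so $B_k$ admits a measurable fundamental domain $F$ with $B_k = \bigsqcup_{i=0}^{k-1} T^i F$; taking $A = F$ gives $\mu(A) = \mu(B_k)/k > 0$ and $A \cap TA = F \cap TF = \emptyset$. In either case the required separating set exists.

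With $T = \mathrm{id}$ a.e.\ established, the conclusion is a one-line computation. For a.e.\ $\omega$ one has $T^j\omega = \omega$ for every $j \in \mathbb{Z}$, so each average defining $T^\ast\chi_E(\omega)$ equals $\chi_E(\omega)$, whence $T^\ast\chi_E(\omega) = \chi_E(\omega)$ for a.e.\ $\omega$. Since $\alpha \in [0,1)$, the superlevel set $\{\omega : T^\ast\chi_E(\omega) > \alpha\}$ coincides with $E$ modulo a null set, so $\mu(\{T^\ast\chi_E > \alpha\}) = \mu(E)$ for every $E$ with $\mu(E) > 0$. Taking the supremum over such $E$ gives $C_T^\ast(\alpha) = 1$ for all $\alpha \in [0,1)$.

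The main obstacle is the middle step: constructing the positive-measure separating set $A$ over $\{T \neq \mathrm{id}\}$. This is where the genuine ergodic-theoretic input enters, through Rokhlin's lemma on the aperiodic part together with the existence of measurable fundamental domains for the finite-period parts (the latter being a standard consequence of measurable selection for free $\mathbb{Z}/k\mathbb{Z}$-actions on a Lebesgue space). The reductions at the start and the finish are essentially formal; all the weight of the lemma sits in the rigidity statement $N_T = 1 \Rightarrow T = \mathrm{id}$ a.e.
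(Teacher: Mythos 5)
Your overall strategy is sound on a \emph{standard} (Lebesgue) probability space, but the route you chose proves the lemma via a strictly stronger rigidity claim --- $N_T=1$ forces $T=\mathrm{id}$ a.e. --- and that claim is \emph{false} at the level of generality of the statement, which assumes only an abstract probability space $(\Omega,\Sigma,\mu)$. Concretely: let $\Omega=[0,1)$, let $\Sigma$ consist of the countable and co-countable sets, with $\mu(A)=0$ for $A$ countable and $\mu(A)=1$ for $A$ co-countable, and let $T\omega=(\omega+\theta)\bmod 1$ with $\theta$ irrational. Then $T$ is an invertible measure preserving transformation; every $A\in\Sigma$ with $\mu(A)>0$ is co-countable, hence so is $TA$, and two co-countable sets always intersect, so no positive-measure $A$ is disjoint from $TA$, i.e.\ $N_T=1$. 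Yet $T\omega\neq\omega$ for \emph{every} $\omega$ --- indeed every point is aperiodic, so your set $B_\infty$ has full measure and the conclusion of Rokhlin's lemma fails outright for this space. (The lemma itself survives: any positive-measure $E$ here has $\mu(E)=1$, so $C_T^\ast(\alpha)=1$ trivially.) Every tool in your middle step --- measurability of $\{\omega:\,T\omega\neq\omega\}$ and of the period sets $B_k$, Rokhlin's lemma on $B_\infty$, measurable fundamental domains for free $\mathbb{Z}/k\mathbb{Z}$-actions --- requires the space to be countably separated/standard, an assumption the lemma does not make; the same applies to your side remark that the $\alpha=0$ case forces $T=\mathrm{id}$ a.e.\ via a countable separating family.

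The paper's proof avoids globalizing entirely and thereby works in full generality: fix $E$ with $0<\mu(E)<1$ and show directly that $\Omega\setminus E$ is a.e.\ invariant under both $T$ and $T^{-1}$. If $\tilde{A}\coloneqq(\Omega\setminus E)\cap T^{-1}E$ had positive measure, then $\tilde{A}$ and $T\tilde{A}\subseteq E$ would be disjoint sets of positive measure, contradicting $N_T=1$; symmetrically for $T^{-1}$, since $T^{-1}\tilde{A}$ and $T(T^{-1}\tilde{A})=\tilde{A}$ would be disjoint. Iterating these two a.e.\ statements gives $T^j\omega\in\Omega\setminus E$ for all $j\in\mathbb{Z}$ for a.e.\ $\omega\in\Omega\setminus E$, whence $T^\ast\chi_E=\chi_E$ a.e.\ and $C_T^\ast(\alpha)=1$ --- your final computation, reached with no structure theory at all. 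If you add the hypothesis that $(\Omega,\Sigma,\mu)$ is a Lebesgue space, your argument does go through, and even then the machinery is heavier than necessary: from $\mu(\{T\neq\mathrm{id}\})>0$ a countable separating family $\{C_i\}$ yields some $i$ with $\mu(C_i\cap T^{-1}(\Omega\setminus C_i))>0$, and that set is already disjoint from its $T$-image, with no need for the $B_\infty$/$B_k$ decomposition, Rokhlin, or fundamental domains. As written, though, the deduction $N_T=1\Rightarrow T=\mathrm{id}$ a.e.\ is a genuine gap for the lemma as stated.
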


\begin{proof} Let $A\subset \Omega$ be a set of positive measure. Since we obviously have $M_S \chi_A(\omega) =1$ for every $\omega\in A$ it will be enough to show that we also have $M_S  \chi_A(\omega)  = 0$ for $\mu$-a.e. $\omega\in\Omega\setminus A$. To do this, it suffices to show that, if $0 < \mu(A) < 1$, then for $\mu$-a.e. $\omega \in \Omega \setminus A$ we have $T\omega \in \Omega \setminus A$ and that $T^{-1}\omega \in \Omega \setminus A$ (the both of which would imply that for $\mu$-a.e. $\omega \in \Omega \setminus A$ that $T^j \omega \in \Omega \setminus A$ for every $j$.)  Well, if the assertion $ T\omega \in \Omega \setminus A$ for $\mu$-a.e. $\omega \in \Omega \setminus A$ were false, then there would be a set $\tilde{A} \subset \Omega \setminus A$ with $\mu(\tilde{A}) > 0$ such that $T \tilde{A} \subset A$.   But as $\tilde{A}$ and $A$ are disjoint and $T$ is an invertible measure preserving transformation, we would have that $\tilde{A}$ and $T\tilde{A}$ constitute disjoint sets of positive $\mu$ measure, contradicting the assumption that $N_T = 1$.  If the assertion $ T^{-1}\omega \in \Omega \setminus A$ for $\mu$-a.e. $\omega \in \Omega \setminus A$  were false, then there would be a set $\tilde{A} \subset \Omega \setminus A$  of positive measure such that $T^{-1}\tilde{A} \subset A$.  But then  $T^{-1}\tilde{A}$, $T(T^{-1}\tilde{A}) = \tilde{A}$ would constitute disjoint sets of positive measure, again contradicting the assumption that $N_T = 1$.
\end{proof}
We can now give a characterization of smoothness for $C_T ^*$ in terms of the index $N_T$.

\begin{thm}\label{t.index}Let $T$ be an invertible measure preserving transformation on a probability space $(\Omega,\Sigma,\mu)$ with index $N_T\in[1,\infty]$. Then there are the following possibilities
	\begin{itemize}
		\item [(i)] If $N_T=1$ then $C_T ^*(\alpha)=1$ on $[0,1)$ and thus $C_T ^*\in C^\infty(0,1)$.
		 \item [(ii)] If $N_T=\infty$ then $C_T ^*(\alpha)=\frac{2}{\alpha}-1$ on $(0,1)$ and thus $C_T ^*\in C^\infty(0,1)$.
		 \item[(iii)] If $1<N_T<\infty$ then $C_T ^*$ is discontinuous.
	\end{itemize}
\end{thm}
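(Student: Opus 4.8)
The plan is to dispatch the three cases separately, the only real work being in (iii). Case (i) is immediate: Lemma~\ref{l.index1} already gives $C_T^\ast(\alpha)=1$ on $[0,1)$, which lies in $C^\infty(0,1)$. For case (ii) I would note that the hypothesis ``non-periodic'' is used in Lemma~\ref{l.equal} only to produce, for each $N$, a set $A$ with $A,TA,\dots,T^NA$ pairwise disjoint via the Katznelson--Weiss tower; but the assumption $N_T=\infty$ \emph{is} exactly the statement that such a set exists for every $N$. Since for $n=1$ we have $M_S^\ast=T^\ast$, repeating the proof of Lemma~\ref{l.equal} verbatim gives $C_T^\ast(\alpha)\ge\tilde C_S(\alpha)$, and Lemma~\ref{l.ergfromdisc} gives the reverse inequality, so $C_T^\ast(\alpha)=\tilde C_S(\alpha)$. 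By Corollary~\ref{c.smoothness}(ii) this equals $\tfrac2\alpha-1$, which is smooth on $(0,1)$.

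For (iii), set $K\coloneqq N_T$ with $1<K<\infty$. I would first establish a \emph{structural} fact: finite index forces a.e.\ bounded periodicity. Indeed, if a set of positive measure consisted of points whose $T$-orbit has period $>K$ (including aperiodic points), then applying the Rokhlin--Halmos lemma on the aperiodic part and choosing a measurable cross-section on each periodic component of period $\ge K+1$ would yield a positive-measure set $A$ with $A,TA,\dots,T^{K}A$ pairwise disjoint, contradicting $N_T=K$. Hence the orbit of $\mu$-a.e.\ $\omega$ is periodic with some period $p(\omega)\le K$. Next I would record a lower bound: taking a set $A$ with $A,TA,\dots,T^{K-1}A$ disjoint (which exists since $N_T=K$) and $E\coloneqq\bigcup_{j=0}^{K-2}T^jA$, for $\omega\in T^{K-1}A$ the points $T^{-1}\omega,\dots,T^{-(K-1)}\omega$ all lie in $E$ while $\omega\notin E$, so the window $\{-(K-1),\dots,0\}$ gives $T^\ast\chi_E(\omega)\ge\frac{K-1}{K}$. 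Therefore $\{T^\ast\chi_E>\alpha\}\supseteq E\cup T^{K-1}A$ for $\alpha<\frac{K-1}{K}$, and since $\mu(E)=(K-1)\mu(A)$ we obtain $C_T^\ast(\alpha)\ge\frac{K}{K-1}>1$ on $\big(0,\frac{K-1}{K}\big)$.

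The crux is to show that $C_T^\ast$ is a step function. For a.e.\ $\omega$ the sequence $j\mapsto\chi_E(T^j\omega)$ is periodic of period $p(\omega)\le K$, so $T^\ast\chi_E(\omega)$ is the maximal running average, over windows containing $0$, of a periodic $\{0,1\}$-sequence. A trimming argument---deleting a full period from any window of length $\ge 2p$ that contains $0$ cannot decrease its average once the average exceeds the per-period mean---shows the supremum is attained on a window of length at most $2K-1$. Hence $T^\ast\chi_E(\omega)$ takes values in the finite set $V\coloneqq\{i/\ell:0\le i\le\ell\le 2K-1\}$, a set independent of $E$ and $\omega$. Consequently, for each $E$ the quotient $\alpha\mapsto\mu(\{T^\ast\chi_E>\alpha\})/\mu(E)$ is constant on every open interval determined by consecutive points of $V$, and therefore so is the supremum $C_T^\ast$. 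Finally, combining the lower bound $C_T^\ast(\alpha)\ge\frac{K}{K-1}$ near $0$ with the transference bound $C_T^\ast(\alpha)\le\frac2\alpha-1\to1$ from Lemma~\ref{l.ergfromdisc} shows $C_T^\ast$ is non-constant; a non-constant function that is constant between consecutive points of a finite set must jump, so $C_T^\ast$ is discontinuous.

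I expect the main obstacle to be making the ``finitely many values'' step fully rigorous: one must verify that the trimming argument bounds the optimal window length by a constant depending only on $K$, confirm the measurability of the period function $p(\cdot)$ so that the structural reduction is legitimate, and check that the supremum over $E$ of the individually piecewise-constant quotients remains piecewise constant with breakpoints in $V$. The structural claim (finite index $\Rightarrow$ bounded periodicity a.e.), while standard, also needs the careful separation into aperiodic and periodic-of-bounded-period components.
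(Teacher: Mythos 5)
Your proposal is correct, and while (i) and (ii) coincide with the paper's proof (Lemma~\ref{l.index1}, respectively rerunning the argument of Lemma~\ref{l.equal} with the hypothesis $N_T=\infty$ supplying the disjoint tower, plus Lemma~\ref{l.ergfromdisc} and Corollary~\ref{c.smoothness}), your treatment of (iii) is a genuinely different route. The paper proves no structural rigidity at all: it exhibits one explicit jump at $\alpha_0=\frac{2N_T-2}{2N_T-1}$. For the lower bound it sharpens your construction by passing to the $T^{N_T}$-invariant core $\tilde A=\bigcap_{j\in\mathbb Z}T^{jN_T}A$ and taking $E=T\tilde A\cup\cdots\cup T^{N_T-1}\tilde A$, so that the two-sided window $[-(N_T-1),N_T-1]$ around $\omega\in\tilde A$ meets $E$ in $2N_T-2$ of its $2N_T-1$ points; this yields $C_T^*(\alpha)\ge\frac{N_T}{N_T-1}$ for all $\alpha<\alpha_0$, whereas your window $[-(K-1),0]$ only reaches the lower threshold $\frac{K-1}{K}<\alpha_0$. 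For the complementary bound the paper shows $C_T^*(\alpha)=1$ on $(\alpha_0,1)$ via Poincar\'e recurrence: the return time to $\Omega\setminus E$ is a.e.\ at most $N_T$ (a return-time level set $\Lambda_k$ with $k>N_T$ and positive measure would give $\Lambda_k,T\Lambda_k,\ldots,T^{k-1}\Lambda_k$ pairwise disjoint, contradicting the index), so writing a window length as $N_Tr+s$ every average at $\omega\notin E$ is at most $1-\frac{1}{N_T+s/r}\le\alpha_0$. Your route instead establishes a.e.\ periodicity of period at most $N_T$, and your trimming computation is sound (deleting $p$ consecutive terms removes exactly $pm$ ones, the average cannot decrease once it is at least $m$, and a length-$p$ window containing $0$ realizes $m$, so the supremum is a maximum over windows of length at most $2N_T-1$); together with the endpoint comparison through Lemma~\ref{l.ergfromdisc} this correctly forces a jump. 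What your approach buys is a stronger conclusion---$C_T^*$ is a step function with breakpoints in the fixed finite set $V$, not merely discontinuous---but at a real cost in machinery and generality: the rigidity step needs the Rokhlin lemma on the aperiodic part and measurable cross-sections (or an iterated Halmos three-set argument) on the period-$p$ parts with $p>N_T$, which is routine on a standard Lebesgue space but requires measure-algebra care on the abstract, possibly atomic, probability spaces the theorem allows (on an atomic part one should replace pointwise periodicity by mod-null periodicity of the sequence $j\mapsto\chi_E(T^j\omega)$, which still lands in $V$; measurability of the sets $\{\omega:\,T^j\omega=\omega\}$ is implicitly assumed by the paper's definition of non-periodicity, so that much is fair game). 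The paper's recurrence argument needs none of this apparatus and moreover pinpoints both the jump location and the exact value $1$ above it.
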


\begin{proof} Statement (i) follows from Lemma~\ref{l.index1} while (ii) follows by an inspection of the proof of Theorem~\ref{t.main}, replacing the use of the Kakutani-Rokhlin lemma with the hypothesis $N_T=\infty$. It remains to show (iii) which is the main content of the theorem in hand.

Let $T$ be an invertible measure preserving transformation such that $1 < N_T < \infty$.   We will show that $C_T ^*$ is discontinuous by proving that it has a jump discontinuity at $\alpha = \frac{2 N_T - 2}{2 N_T-1}$.   This will be done by showing that for every $\epsilon > 0$ we have that $C_T^*(\frac{2N_T-2}{2N_T - 1} - \epsilon) \geq \frac{N_T}{N_T-1}$ and subsequently showing that $C_T^\ast(\alpha) = 1$ for all $\frac{2N_T - 2}{2N_T - 1} < \alpha < 1$.

We now show that given $\epsilon > 0$, $C_T^*(\frac{2N_T-2}{2N_T - 1} - \epsilon) \geq \frac{N_T}{N_T-1}$.   By the definition of $N_T$, there exists a set $A \subset \Omega$ with $\mu(A) > 0$ such that $A, TA, \ldots, T^{N_T-1}A$ are pairwise disjoint.   Let $\tilde{A} \coloneqq \cap_{j=-\infty}^{\infty}T^{jN_T}A$. Note that $\mu(\tilde A ) = \mu(A)$.  Let now
\[
E \coloneqq T\tilde{A} \cup \cdots \cup T^{N_T - 1}\tilde{A}
\]
so that $\mu(E\cap \tilde A)=0$. Observe also that $T^{-1}\tilde A = T^{N_T-1}\tilde{A}$, $T^{-2}\tilde{A} = T^{N_T-2}\tilde{A}$, \ldots, $T^{-N_T+1}\tilde{A} = T\tilde{A}$.  So if $\omega \in \tilde{A}$
\[
\begin{split}
M^\ast \chi_E(\omega) &\geq \frac{1}{2(N_T - 1) + 1}\sum_{j= -(N_T - 1)} ^{N_T - 1}\chi_E(T^j \omega)
\\
& = \frac{1}{2(N_T - 1) + 1}\left[\sum_{j=1}^{N_T - 1}\chi_E(T^{-j} \omega) + \sum_{j=1}^{N_T - 1}\chi_E(T^j \omega)\right]
\\
& = \frac{1}{2(N_T - 1) + 1} \cdot 2(N_T - 1) = \frac{2N_T - 2}{2N_T - 1}.
\end{split}
\]
Since we obviously have that $M^\ast\chi_E(\omega) = 1$ on $E$ and $\mu(\tilde A\cap E)=0$, we conclude that
\[
\mu\Big(\Big\{\omega \in \Omega : M^\ast \chi_E(\omega) \geq \frac{2N_T - 2}{2N_T - 1}\Big\}\Big) \geq \mu(\tilde A)+\mu(  E)=\mu(\tilde{A}) N_T,
\]
where in the last equality we used that $\mu(E) = (N_T - 1)\mu(\tilde{A})$ since $T$ is measure preserving. Thus, for every $\epsilon>0$ we have
\[
C_{T}^\ast \Big(\frac{2N_T - 2}{2N_T - 1} - \epsilon\Big) \geq \frac{\mu(\tilde{A})\cdot N_T}{(N_T - 1)\mu(\tilde{A})} = \frac{N_T}{N_T - 1}.
\]

It remains to show that, if $\frac{2N_T - 2}{2 N_T - 1} < \alpha < 1$, then $C_T^\ast(\alpha) = 1$. For this let $E \subset \Omega$ with $0<\mu(E)<1$. It suffices to show that for $\mu$-a.e. $\omega \in \Omega \backslash E$ we have $M^\ast\chi_E(\omega) \geq \frac{2N_T - 2}{2N_T - 1}$.  To do this, it suffices to show that for any $J \leq 0 \leq K$ with $J \neq K$ and for $\mu$-a.e. $\omega \in \Omega$ we have
\[
\frac{1}{K - J + 1}\sum_{i=J}^{K}\chi_E(T^i\omega) \leq \frac{2N_T - 2}{2N_T - 1}.
\]
Note that, as $\omega \notin E$, if $K - J + 1 \leq 2N_T - 1$ then
\[
\frac{1}{K - J + 1}\sum_{i=J}^{K}\chi_E(T^i\omega) \leq \frac{(K - J + 1) - 1}{K - J + 1} \leq \frac{2N_T - 2}{2N_T - 1},
\]
so we may assume without loss of generality that $K - J + 1 > 2N_T - 1$.   Now we claim that for $\mu$-a.e. $\omega \in \Omega \setminus E$ at least one of $T\omega$, $T^2\omega$, \ldots, $T^{N_T}\omega$ lies in $\Omega \setminus E$.

To see this let us define $n_{\Omega \setminus E}(\omega)$ to be the return time of a point $\omega\in \Omega \setminus E$, namely $n_{\Omega \setminus E}(\omega)\coloneqq \inf\{n\geq 1:\, T^n\omega \in \Omega \setminus E\}$. By Poincar\'e recurrence we	 have that, $\mu$-a.e.
\[
\Omega \setminus E =\bigcup_{k=1} ^\infty \{\omega\in \Omega\setminus E:\, n_{\Omega \setminus E}(\omega)=k\}\eqqcolon \bigcup_{k=1} ^\infty \Lambda_k.
\]
Now if $\mu(\Lambda_k)>0$ for some $k$ we have that $k\leq N_T$. Indeed, if we had $k>N_T\Leftrightarrow k-1\geq N_T$ then we would have that $\Lambda_k,T(\Lambda_k),\ldots,T^{k-1}(\Lambda_k)$ are disjoint, contradicting the definition of $N_T$. Thus,
\[
\Omega\setminus E =\bigcup_{1\leq k\leq N_T} \Lambda_k \cup \mathcal O,
\]
where $\mu(\mathcal O)=0$. This means that for $\mu$-a.e. $\omega\in\Omega\setminus E$ we have that $\omega\in \Lambda_k$ for some $1\leq k\leq N_T$. Thus, for $\mu$-a.e. $\omega\in\Omega\setminus E$ there exists $1\leq k\leq N_T$ such that $T^k\omega \in \Omega\setminus E$, proving the claim.

Let us write $K-J+1 = N_T r + s $ where $r \geq 1$ and $0 \leq s \leq N_T - 1$.  For $\mu$-a.e. $\omega \in \Omega \backslash E$ we then have
\[
\frac{1}{K - J + 1}\sum_{i=J}^{K}\chi_E(T^i \omega) \leq \frac{(N_T-1)r + s}{N_T r + s}=1-\frac{1}{N_T+s/r}
\]
which is bounded above by $\frac{2N_T-2}{2N_T-1}$, seen by observing that the right hand side is bounded above by the value obtained by using that $s/r\leq N_T-1$.
\end{proof}

The following corollary is an immediate consequence of Theorem~\ref{t.index}.

\begin{cor}\label{c.index}Let $T$ be an invertible measure preserving transformation on a probability space $(\Omega,\Sigma,\mu)$ with index $N_T\in[1,\infty]$. Then $C_T ^*\in C^\infty(0,1)$ if and only $N_T = 1$ or $N_T = \infty$.	
\end{cor}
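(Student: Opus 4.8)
The plan is to read the corollary off directly from the trichotomy established in Theorem~\ref{t.index}, since the index $N_T$ ranges over $\{1,2,3,\ldots\}\cup\{\infty\}$ and the three cases listed there are mutually exclusive and exhaustive. I would split the biconditional into its two implications and dispatch each by invoking the appropriate case.

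For the sufficiency direction, I would observe that the two ``good'' values of the index each produce a manifestly smooth function. If $N_T=1$, case (i) gives $C_T^*(\alpha)=1$ for all $\alpha\in(0,1)$, a constant and hence an element of $C^\infty(0,1)$. If $N_T=\infty$, case (ii) gives $C_T^*(\alpha)=\frac{2}{\alpha}-1$, a rational function whose only singularity is at $\alpha=0$; as this lies outside the open interval $(0,1)$, the function is smooth there. Thus $N_T\in\{1,\infty\}$ forces $C_T^*\in C^\infty(0,1)$.

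For the necessity direction I would argue by contraposition. If $N_T\notin\{1,\infty\}$, then, because $N_T$ takes values in the positive integers together with $\infty$, the only remaining alternative is $1<N_T<\infty$. In that regime case (iii) of Theorem~\ref{t.index} asserts that $C_T^*$ is discontinuous---indeed it exhibits a jump at $\alpha=\frac{2N_T-2}{2N_T-1}$---so $C_T^*$ cannot even be continuous on $(0,1)$, let alone lie in $C^\infty(0,1)$. There is essentially no obstacle here, as all of the substantive content resides in Theorem~\ref{t.index} and the corollary is a bookkeeping statement assembling its three cases; the only points demanding a moment's care are the implicit use of the fact that smoothness implies continuity (so that a jump discontinuity immediately precludes membership in $C^\infty(0,1)$) and the remark that the three cases genuinely exhaust the range $[1,\infty]$ of the index.
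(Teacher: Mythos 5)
Your proposal is correct and matches the paper's proof exactly: the paper also derives the corollary as an immediate consequence of the trichotomy in Theorem~\ref{t.index}, with cases (i) and (ii) giving smoothness and case (iii) ruling it out. Your added remarks that the three cases exhaust $[1,\infty]$ and that a jump discontinuity precludes membership in $C^\infty(0,1)$ are exactly the bookkeeping the paper leaves implicit.
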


\section{One sided discrete and ergodic maximal operators}\label{s.onesided}
Due to its prevalence in ergodic theory, it is appropriate for us to briefly discuss the smoothness of Tauberian constants associated with \emph{one-sided} ergodic maximal operators.   Given a measure-preserving transformation $T$ on a  probability space $(\Omega, \Sigma, \mu)$, the associated one-sided ergodic maximal operator $T^{*+}$ is given by
\[
T^{*+}f(\omega) \coloneqq \sup_{N \geq 0} \frac{1}{N + 1}\sum_{j=0}^{N}|f(T^j\omega)|
\]
and the corresponding Tauberian constants $C^{*+}(\alpha)$ are given by
\[
C^{*+}_T(\alpha) \coloneqq \sup_{\substack{E \subset \Omega \\ \mu(E) > 0}} \frac{1}{\mu(E)}\mu(\{\omega \in \Omega :\, T^{*+}\chi_E(\omega) > \alpha\}).
\]

In general, $C^{*+}(\alpha)$ need not be H\"older continuous on $(0,1)$.  For example, we may
define $T$ on $[0,1)$ equipped with the Lebesgue measure by $T(x) \coloneqq \left(x + \frac{1}{2}\right) \bmod 1$.  The associated Tauberian constants $C^{*+}_T (\alpha)$ satisfy the formula

\[
C^{*+}(\alpha) =
\begin{cases}
  2 & \text{if}\ 0 < \alpha < \frac{1}{2} \\
  1          & \text{if}\ \frac{1}{2} \leq \alpha < 1.
\end{cases}
\]
and hence $C^{*+}_T (\alpha)$ is not continuous on $(0,1)$.  However, similarly to the two-sided case, if $T$ is a non-periodic transformation we have that $C^{*+}_T (\alpha)$ is smooth on $(0,1)$, in fact satisfying the formula
\[
C^{*+}_T (\alpha) = \frac{1}{\alpha}.
\]
Defining the one-sided discrete Hardy-Littlewood maximal operator
\[
\tilde{M}_{\textup{HL}}^+f(n) \coloneqq \sup_{N \geq 1}\frac{1}{N}\sum_{j=0}^{N-1}|f(n + j)|
\]
and its associated Tauberian constants
\[
\tilde{C}_{\textup{HL}}^+(\alpha) \coloneqq \sup_{\substack{E \subset \mathbb{Z} \\ 0 < \#E < \infty}} \frac{1}{\#E}\#\{n \in \mathbb{Z}:\, \tilde{M}_{\textup{HL}}^+\chi_E (n) > \alpha\},
\]
we also have $\tilde{C}_{\textup{HL}}^+(\alpha) = \frac{1}{\alpha}.$

\begin{thm}
Let $T$ be  a non-periodic transformation on the  probability space $(\Omega, \Sigma, \mu)$. Then  $C^{*+}_T (\alpha)$ is smooth on $(0,1)$, being given by the formula
\[
C^{*+}_T (\alpha) = \frac{1}{\alpha}.
\]
Moreover the Tauberian constants $\tilde{C}_{\textup{HL}}^+(\alpha)$ satisfy the formula
\[
\tilde{C}_{\textup{HL}}^+(\alpha) = \frac{1}{\alpha}.
\]
\end{thm}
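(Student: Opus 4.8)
The plan is to prove the two displayed formulas simultaneously by assembling a chain of equalities that mirrors the structure of Sections~\ref{s.HolderCS} and~\ref{s.HolderCS*}. Writing $C^+_{\textup{HL}}(\alpha)$ for the Tauberian constant of the \emph{continuous} one-sided Hardy--Littlewood maximal operator $M^+f(x)=\sup_{h>0}\frac1h\int_x^{x+h}|f|$ on $\mathbb R$, I would establish
\[
C^{*+}_T(\alpha)\;=\;\tilde{C}_{\textup{HL}}^+(\alpha)\;=\;C^+_{\textup{HL}}(\alpha)\;=\;\tfrac1\alpha,
\]
where the first equality requires $T$ non-periodic and the last two are independent of $T$. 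Since $\alpha\mapsto 1/\alpha$ is $C^\infty$ on $(0,1)$, smoothness is then automatic, and both formulas in the statement fall out at once.

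I would first treat the purely discrete equality $\tilde{C}_{\textup{HL}}^+(\alpha)=C^+_{\textup{HL}}(\alpha)$ by repeating, for forward intervals, the comparison argument used to prove $\tilde C_S(\alpha)=C_S(\alpha)$ in Section~\ref{s.HolderCS}. For $\tilde{C}_{\textup{HL}}^+(\alpha)\le C^+_{\textup{HL}}(\alpha)$ one attaches to a finite $\tilde E\subset\mathbb Z$ the set $E=\bigcup_{k\in\tilde E}[k,k+1)$ with $|E|=\#\tilde E$ and checks $M^+\chi_E(x)\ge \tilde{M}_{\textup{HL}}^+\chi_{\tilde E}(n)$ for every $x\in[n,n+1)$; the one-sidedness forces a short case split, since for $n\in\tilde E$ the window $[x,n+1)$ already gives value $1$, whereas for $n\notin\tilde E$ aligning the forward window to the right endpoint of the optimal discrete window only raises the average. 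The reverse inequality uses the dyadic-approximation scheme of that same proof. The value $C^+_{\textup{HL}}(\alpha)=1/\alpha$ is classical: the rising sun lemma yields the exact bound $|\{M^+\chi_E>\alpha\}|\le\frac1\alpha|E|$, while the single interval $E=[0,1]$ has halo $(1-\frac1\alpha,1]$ of measure $1/\alpha$, proving sharpness. Alternatively, one can avoid the continuous model and read $\tilde{C}_{\textup{HL}}^+(\alpha)=1/\alpha$ directly off the test sets $\tilde E=\{0,1,\dots,L-1\}$, whose one-sided halo has cardinality $L+\lfloor L(\frac1\alpha-1)\rfloor$ (forward windows from $n<0$ capture all of $\tilde E$ with density $L/(L+|n|)$, and nothing to the right of $\tilde E$ lies in the halo), giving ratio tending to $1/\alpha$ as $L\to\infty$, combined with the transference upper bound below.

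For the ergodic equality, the inequality $C^{*+}_T(\alpha)\le\tilde{C}_{\textup{HL}}^+(\alpha)$ holds for every measure preserving $T$ and follows from the Calder\'on transference principle exactly as in Lemma~\ref{l.ergfromdisc}, invoking \cite{HPSolErg}*{Theorem~3.1} with $\mathcal B$ now the family of forward intervals $[0,N)\subset\mathbb R$ containing the origin. For the reverse inequality $C^{*+}_T(\alpha)\ge\tilde{C}_{\textup{HL}}^+(\alpha)$, which uses non-periodicity, I would replay Lemma~\ref{l.equal}. Given a finite $\tilde E\subset\mathbb Z$, its one-sided halo $H=\{n:\tilde{M}_{\textup{HL}}^+\chi_{\tilde E}(n)>\alpha\}$ is finite and, since forward windows cannot reach points to the right of $\max\tilde E$, after a translation we may assume $\tilde E\cup H\subseteq\{0,1,\dots,P\}$. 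The Kakutani--Rokhlin lemma (here only ordinary non-periodicity of the single $T$ is needed, cf.\ \cite{KW}) supplies a set $A$ of positive measure with $A,TA,\dots,T^{P}A$ pairwise disjoint; setting $E=\bigcup_{k\in\tilde E}T^kA$ and transferring the optimal forward window at each $m\in H$ gives $T^{*+}\chi_E(\omega)>\alpha$ for $\omega\in T^mA$, whence $\mu(\{T^{*+}\chi_E>\alpha\})\ge\mu(A)\#H$ and $\mu(E)=\mu(A)\#\tilde E$. Taking the supremum over $\tilde E$ yields the claim, and chaining the three equalities completes the proof.

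The routine transference and Rokhlin steps carry over essentially verbatim from Section~\ref{s.HolderCS*}; the only genuinely new point is pinning the \emph{exact} constant $1/\alpha$ in the one-sided, non-symmetric setting. The care is concentrated in the cell-by-cell comparison $M^+\chi_E(x)\ge\tilde{M}_{\textup{HL}}^+\chi_{\tilde E}(n)$ and in the bookkeeping of the halo cardinality $L+\lfloor L(\frac1\alpha-1)\rfloor$: because the averages look only forward, halos grow to the left of $\tilde E$ but never to its right, so the symmetric arrangement exploited for $\tilde M_S$ must be replaced by one-sided rising-sun geometry. I expect this asymmetry to be the main obstacle, but once it is handled the floor corrections wash out as $L\to\infty$ and the value $1/\alpha$ emerges cleanly.
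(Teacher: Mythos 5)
Your proposal is correct, and its lower-bound half coincides with the paper's: both transfer the discrete test sets to $\Omega$ via the Kakutani--Rokhlin lemma (you correctly note that for a single non-periodic $T$ the classical lemma suffices, and your observation that the one-sided halo lies entirely to the left of $\tilde E$, so that after translation everything fits in $\{0,\dots,P\}$, is exactly the point that makes the tower argument go through). Where you genuinely diverge is the upper bound and the evaluation of the discrete constant. The paper gets $C^{*+}_T(\alpha)\leq 1/\alpha$ in one stroke from the maximal ergodic theorem (``the proof of the Birkhoff Ergodic Theorem,'' \cite{Pet}), with no detour through $\mathbb Z$, and then simply \emph{cites} Hardy--Littlewood \cite{HL} for $\tilde{C}_{\textup{HL}}^+(\alpha)=1/\alpha$, leaving the details to the reader. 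You instead run the Calder\'on transference of Lemma~\ref{l.ergfromdisc} to get $C^{*+}_T(\alpha)\leq\tilde{C}_{\textup{HL}}^+(\alpha)$ and then actually prove $\tilde{C}_{\textup{HL}}^+(\alpha)=C^+_{\textup{HL}}(\alpha)=1/\alpha$ by a one-sided version of the discrete--continuous equivalence of Section~\ref{s.HolderCS} together with the rising sun lemma and the sharp example $E=[0,1]$ with halo $(1-\tfrac1\alpha,1]$; your cell-by-cell case split (value $1$ when $n\in\tilde E$, strictly improved average when $n\notin\tilde E$ since $[n,x)$ carries no mass) is precisely the correct fix for the asymmetry. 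The paper's route is shorter and sidesteps the discrete--continuous comparison entirely; yours is more self-contained, parallels the architecture of Sections~\ref{s.HolderCS}--\ref{s.HolderCS*}, and supplies an honest proof of the discrete formula that the paper only cites. One small slip worth fixing: in your ``alternative'' that avoids the continuous model, the test sets $\tilde E=\{0,\dots,L-1\}$ give only the lower bound $\tilde{C}_{\textup{HL}}^+(\alpha)\geq 1/\alpha$, and the ``transference upper bound'' cannot close it --- transference bounds $C^{*+}_T$ by $\tilde{C}_{\textup{HL}}^+$, not $\tilde{C}_{\textup{HL}}^+$ by $1/\alpha$; you would still need the discrete weak-type $(1,1)$ inequality with constant $1$ (discrete rising sun, i.e., the content of \cite{HL}). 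Since your primary chain already gives $\tilde{C}_{\textup{HL}}^+(\alpha)\leq C^+_{\textup{HL}}(\alpha)=1/\alpha$, this does not affect the validity of the proof.
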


\begin{proof}
By the proof of the Birkhoff Ergodic Theorem  (see, e.g. \cite{Pet}) we immediately realize that
\[
C^{*+}_T (\alpha) \leq 1/\alpha .
\]
The converse inequality $C^{*+}_T (\alpha) \geq 1/\alpha$ follows from the Kakutani Rokhlin Lemma and the observation that the Tauberian constants $\tilde{C}^+_{\textup{HL}}(\alpha)$ associated  with the discrete one-sided Hardy-Littlewood maximal operator $\tilde{M}_{\textup{HL}}^+$, defined by
\[
\tilde{M}_{\textup{HL}}^+f(n) \coloneqq \sup_{N \geq 1}\frac{1}{N}\sum_{j=0}^{N-1}|f(n + j)|,
\]
satisfy the equality
\[
\tilde{C}^+_{\textup{HL}}(\alpha) = \frac{1}{\alpha}.
\]
The latter may be seen to hold from the classical paper on maximal operators \cite{HL} by Hardy and Littlewood, and the details are left to the reader.
\end{proof}

\section{Future Directions}\label{s.problems}
The results in this paper suggest the following problems that the authors believe would be suitable avenues for further research.

\begin{problem}
We have shown that if $U_1, \ldots, U_n$ form a non-periodic collection of commuting  transformations on the non-atomic probability space $(\Omega, \Sigma, \mu)$, the associated Tauberian constants $C^\ast_{S}(\alpha)$ are H\"older continuous over any closed interval $K$ in $(0,1)$.  Must $C^\ast _{S}(\alpha) \in C^p(0,1)$ for every $p > 1$?   Must in fact $C^\ast_{S}(\alpha)$ be smooth in $(0,1)$?   We remark that the analogues of this problem for the discrete strong maximal operator $\tilde{M}_S$ and the continuous strong maximal operator $M_S$ remain unsolved as well.
\end{problem}

\begin{problem}
The use of the Katznelson-Weiss lemma in this paper requires the condition that $U_1 , \ldots, U_n$ be a non-periodic collection of \emph{commuting} invertible measure preserving transformations on $(\Omega, \Sigma, \mu)$.  It would be very interesting to know to what extent both the conclusions of the Katznelson-Weiss lemma and the results of this paper
hold in the context of non-periodic collections of non-commuting operators $U_1, \ldots, U_n$.
\end{problem}

\begin{problem}
We strongly suspect that an analogue of the connection between the index of an invertible measure preserving transformation $T$ and the continuity of the associated Tauberian constant function $C_T ^*$, provided by Theorem~\ref{t.index} and Corollary~\ref{c.index}, should also exist in the multiparameter setting.  It is unclear, however, what precisely should be the ``index'' associated with a non-periodic collection of commuting invertible measure preserving transformations, and techniques along the lines of the proof of Theorem \ref{t.index} are largely unavailable in the higher dimensional scenario.  This is reminiscent of difficulties that arise in the theory of differentiation of integrals in the multiparameter setting that do not exist in the one-parameter setting.  This is a subject of ongoing research.
\end{problem}

\begin{bibsection}
\begin{biblist}

\bib{bh}{article}{
   author={Beznosova, Oleksandra},
   author={Hagelstein, Paul Alton},
   title={Continuity of halo functions associated to homothecy invariant
   density bases},
   journal={Colloq. Math.},
   volume={134},
   date={2014},
   number={2},
   pages={235--243},
   issn={0010-1354},
   review={\MR{3194408}},
}

\bib{busemannfeller1934}{article}{
author = {Busemann, H.},
author = {Feller, W.},
journal = {Fundamenta Mathematicae},
number = {1},
pages = {226-256},
publisher = {Institute of Mathematics Polish Academy of Sciences},
title = {Zur Differentiation der Lebesgueschen Integrale},
url = {http://eudml.org/doc/212688},
volume = {22},
year = {1934},
}

\bib{CorF}{article}{
   author={C\'ordoba, A.},
   author={Fefferman, R.},
   title={On the equivalence between the boundedness of certain classes of
   maximal and multiplier operators in Fourier analysis},
   journal={Proc. Nat. Acad. Sci. U.S.A.},
   volume={74},
   date={1977},
   number={2},
   pages={423--425},
   issn={0027-8424},
   review={\MR{0433117 (55 \#6096)}},
}

\bib{Gu}{article}{
   author={de Guzm{\'a}n, M.},
   title={Differentiation of integrals in ${\bf R}^{n}$},
   conference={
      title={Measure theory},
      address={Proc. Conf., Oberwolfach},
      date={1975},
   },
   book={
      publisher={Springer},
      place={Berlin},
   },
   date={1976},
   pages={181--185. Lecture Notes in Math., Vol. 541},
   review={\MR{0476978 (57 \#16523)}},
}

\bib{hlp}{article}{
   author={Hagelstein, Paul},
   author={Luque, Teresa},
   author={Parissis, Ioannis},
   title={Tauberian conditions, Muckenhoupt weights, and differentiation
   properties of weighted bases},
   journal={Trans. Amer. Math. Soc.},
   volume={367},
   date={2015},
   number={11},
   pages={7999--8032},
   issn={0002-9947},
   review={\MR{3391907}},
}

\bib{HPSolErg}{article}{
	author = {Hagelstein, Paul},
  author = {Parissis, Ioannis},
  title = {Solyanik estimates in ergodic theory},
 journal = {Colloq. Math.},
 volume={145},
 date={2016},
 pages={193--207},
}

\bib{HP}{article}{
  author = {Hagelstein, Paul},
  author = {Parissis, Ioannis},
  title = {Solyanik Estimates in Harmonic Analysis},
conference={
      title={Special Functions, Partial Differential Equations, and Harmonic Analysis},
   },
  date = {2014},
   book={
      series={Springer Proc. Math. Stat.},
      volume={108},
      publisher={Springer, Heidelberg},
   },
  journal = {Springer Proceedings in Mathematics \& Statistics},
  pages = {87--103},
}

\bib{HP2014Holder}{article}{
  author= {Hagelstein, Paul},
  author= {Parissis, Ioannis},
  title={Solyanik estimates and local H\"older continuity of halo functions of geometric maximal operators},
 journal={Adv. Math.},
 volume={285},
 date={2015},
 pages={434--453},
 review={\MR{3406505}}
   }

\bib{HP2}{article}{
			Author = {Hagelstein, Paul},
			Author = {Parissis, Ioannis},
			Title = {Weighted Solyanik estimates for the Hardy-Littlewood maximal operator and embedding of $A_\infty$ into $A_p$},
			journal={J. Geom. Anal.},
volume={26},
date={2016},
pages={924--946},
			review={\MR{3472823}}
}

\bib{HPS}{article}{

author = {Hagelstein, Paul},		
		author = {Parissis, Ioannis},
author = {Saari, Olli},
			Eprint = {1601.00938},
Title= {Sharp inequalities for one-sided Muckenhoupt weights}
journal={submitted for publication}}

\bib{HS}{article}{
   author={Hagelstein, Paul},
   author={Stokolos, Alexander},
   title={Tauberian conditions for geometric maximal operators},
   journal={Trans. Amer. Math. Soc.},
   volume={361},
   date={2009},
   number={6},
   pages={3031--3040},
   issn={0002-9947},
   review={\MR{2485416 (2010b:42023)}},
}

\bib{HL}{article}{
author={Hardy, G. H.},
author={Littlewood, J. E.},
title={A maximal theorem with function theoretic applications},
journal={Acta Math.},
volume={54},
date={1930},
pages={81--116}}

\bib{KW}{article}{
author={Katznelson, Y.},
author={Weiss, B.}
title={Commuting measure-preserving transformations},
journal={Israel J. Math.},
volume={12}
date={1972}
pages={161--173}
review={\MR{MR0316680}}
}

\bib{Pet}{book}{
author={Petersen, K.},
title={Ergodic Theory},
publisher={Cambridge University Press},
date={1983},
review={\MR{0833286 (87i:28002)}}
}

\bib{Solyanik}{article}{
   author={Solyanik, A. A.},
   title={On halo functions for differentiation bases},
   language={Russian, with Russian summary},
   journal={Mat. Zametki},
   volume={54},
   date={1993},
   number={6},
   pages={82--89, 160},
   issn={0025-567X},
   translation={
      journal={Math. Notes},
      volume={54},
      date={1993},
      number={5-6},
      pages={1241--1245 (1994)},
      issn={0001-4346},
   },
   review={\MR{1268374 (95g:42033)}}
}

\end{biblist}
\end{bibsection}

\end{document}